\providecommand{\customgenericname}{}
\newcommand{\newcustomtheorem}[2]{%
  \newenvironment{#1}[1]
  {%
   \renewcommand\customgenericname{#2}%
   \renewcommand\theinnercustomgeneric{##1}%
   \innercustomgeneric
  }
  {\endinnercustomgeneric}
}
\newcommand{\PP}{\mathbb{P}}
\newcommand{\RR}{\mathbb{R}}
\newcommand{\Set}[1]{\left\{#1\right\}}
\theoremstyle{plain}
\newtheorem{theorem}{Theorem}[section] 
\newtheorem{proposition}[theorem]{Proposition}
\newtheorem{lemma}[theorem]{Lemma}
\newtheorem{corollary}[theorem]{Corollary}
\newtheorem{definition}[theorem]{Definition} 
\newtheorem{remark}[theorem]{Remark}
\begin{document}
\title{Geometric Interpretations of Compatibility for Fundamental Matrices}
\author{Erin Connelly\quad Felix Rydell}

\maketitle

\begin{abstract} 
In recent work, algebraic computational software was used to provide the exact algebraic conditions under which a sixtuple $\{F^{ij}\}$ of fundamental matrices, corresponding to $4$ images, will be compatible, i.e. there will exist cameras $\{P_i\}_{i=1}^4$ such that each pair $P_i,P_j$ has fundamental matrix $F^{ij}$; it has been further demonstrated that quadruplewise compatibility is sufficient for the problem of $n>4$ images. We expand on these prior results by proving equivalent geometric conditions for compatibility. We find that when the camera centers are in general position, compatibility can be characterized via the intersections of epipolar lines in each image. When the camera centers are coplanar, compatibility occurs when the prior condition holds and additionally any one camera center can be reconstructed via the other three.
\end{abstract}



\section{Introduction} 

The \textit{structure-from-motion} pipeline in computer vision aims to build 3D models based on 2D images. This pipepline generally falls into two categories, incremental \cite{snavely2006photo} and global \cite{cui2015global}. In either case, fundamental matrices are central.  

Before we define what a fundamental matrix, we need some notation. Define \textit{projective space} $\PP^n=\PP(\RR^{n+1})$ as the \textit{projectivization} of $\RR^{n+1}$, i.e. the quotient space $(\RR^{n+1}\setminus\{0\})/\sim$ with respect to the equivalence relation that $X\sim Y$ if and only if $X$ and $Y$ are parallel. We use $\sim$ to denote equality in $\PP^n$. We refer to full rank $3\times 4$ projective matrices as \textit{cameras}. The affine kernel of a camera $P$ is spanned by one non-zero vector in $\RR^4$, and its projective class $\ker P$ is its \textit{center}. We denote by $\mathrm{GL}_n$ the set of invertible $n\times n$ matrices and by $\mathrm{PGL}_n$ its projectivization.

Consider the rational map $\psi : \PP(\RR^{3\times 4})\times \PP(\RR^{3\times 4})\dashrightarrow \PP(\RR^{3\times 3})$ defined as follows. Given a pair of $3\times 4$ projective matrices ${P}_1$ and ${P}_2$, the determinant
\begin{align}\label{eq: det}
\det\begin{bmatrix}
{P}_1&\textbf{x}&0\\
{P}_2&0&\textbf{y}
\end{bmatrix}
\end{align}
is a bilinear polynomial in $\textbf{x}$ and $\textbf{y}$, meaning there is a projective matrix ${F}^{12}$ such that \Cref{eq: det} can be written as $\textbf{x}^\top {F}^{12}\textbf{y}$. We define $\psi({P}_1,{P}_2)$ to be this $3\times 3$ projective matrix. If $P_1,P_2$ are cameras with distinct centers, we say that $F^{12}=\psi(P_1,P_2)$ is the \textit{fundamental matrix} of $P_1,P_2$. This map is undefined, i.e. \Cref{eq: det} is the 0 polynomial, precisely when the affine kernels of $P_1$ and $P_2$ intersect outside of $0$. 

Given two cameras $P_1,P_2$ with distinct centers $c_1,c_2$, for any point $p\in\PP^2$ the composition $P_1(P_2^{-1}(p))$ will yield a line in the first image. In particular, this line will pass through the \textit{epipole} $e_1^2:=P_1c_2$. This correspondence from points in one image to lines in the other is given explicitly by the fundamental matrix, in the sense that $F^{12}x$ defines the line $\{y\in\PP^2~:~y^\top F^{12}x=0\}$.

\begin{proposition}[{\hspace{1sp}\cite[Section 9]{Hartley2004}}]\label{prop:properties_of_the_fundamental_matrix} $ $

\begin{enumerate} 
    \item For any rank $2$ matrix $F^{12}\in\PP(\RR^{3\times 3})$, there exists two cameras $P_1,P_2$ such that $F^{12}$ is their fundamental matrix. All other cameras $C_1,C_2$ with fundamental matrix $F^{12}$ satisfy $C_1\sim P_1H,C_2\sim P_2H$ for some $H\in \mathrm{PGL}_4$;
    \item $\psi({P}_2,{P}_1)\sim \psi({P}_1,{P}_2)^\top $;
    \item If $F^{12}$ is the fundamental matrix of $P_1,P_2$, then $\ker F^{12}\sim P_2 \ker P_1$\label{enum: epip};
    \item For cameras $P_1,P_2$, we have $F^{12}\sim\psi(P_1,P_2)$ if and only if $P_1^\top F^{12}P_2$ is a skew-symmetric matrix.
\end{enumerate}
\end{proposition}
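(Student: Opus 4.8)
The plan is to prove the four statements of Proposition~\ref{prop:properties_of_the_fundamental_matrix} in turn, leaning on the explicit determinant formula \eqref{eq: det} that defines $\psi$. The unifying idea throughout is that the bilinear form $\mathbf{x}^\top F^{12}\mathbf{y}$ computes the $4\times 4$ determinant in \eqref{eq: det}, so every claimed identity should be expanded into a statement about that determinant and then verified by elementary linear-algebra manipulations (row/column operations, cofactor expansion, and the behavior of determinants under right-multiplication by $H\in\mathrm{PGL}_4$).

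First, for part (2), I would expand both $\psi(P_1,P_2)$ and $\psi(P_2,P_1)$ directly from \eqref{eq: det}. Writing out the determinant defining $\psi(P_2,P_1)$, one swaps the roles of $P_1,P_2$ and of $\mathbf{x},\mathbf{y}$; the matrix in \eqref{eq: det} for the pair $(P_2,P_1)$ is obtained from the one for $(P_1,P_2)$ by a block row swap together with a column swap of the two bordering columns, which changes the determinant only by a sign. Hence $\mathbf{y}^\top \psi(P_2,P_1)\mathbf{x} = \pm\,\mathbf{x}^\top \psi(P_1,P_2)\mathbf{y}$ for all $\mathbf{x},\mathbf{y}$, and since the left side equals $\mathbf{x}^\top \psi(P_2,P_1)^\top \mathbf{y}$, comparing bilinear forms gives $\psi(P_2,P_1)\sim \psi(P_1,P_2)^\top$ in $\PP(\RR^{3\times 3})$, where the sign is absorbed by projective equivalence.

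For part (4), the strategy is to compute $P_1^\top F^{12} P_2$ and relate its skew-symmetry to the vanishing of the appropriate determinant. I would test skew-symmetry by pairing against arbitrary vectors: a matrix $M$ is skew-symmetric if and only if $\mathbf{v}^\top M \mathbf{v}=0$ for all $\mathbf{v}\in\RR^4$. Setting $M=P_1^\top F^{12}P_2$, the quantity $\mathbf{v}^\top P_1^\top F^{12} P_2 \mathbf{v} = (P_1\mathbf{v})^\top F^{12}(P_2\mathbf{v})$ is, by the defining property of $\psi$, exactly the determinant \eqref{eq: det} with $\mathbf{x}=P_1\mathbf{v}$, $\mathbf{y}=P_2\mathbf{v}$ — provided $F^{12}\sim\psi(P_1,P_2)$. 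That determinant has repeated structure in its bordering columns that forces it to vanish, giving the forward direction. For the converse, I would argue that the bilinear form $\mathbf{x}^\top F^{12}\mathbf{y}$ is determined by its values on the $6$-dimensional space of skew conditions, so skew-symmetry of $P_1^\top F^{12}P_2$ pins down $F^{12}$ up to scale as $\psi(P_1,P_2)$, using that $P_1,P_2$ have full rank.

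For parts (1) and (3), I would invoke the structure of the two-camera setup. Part (3) is the quickest: since $\ker P_1$ is the center $c_1$, the epipole in the second image is $e_2^1 = P_2 c_1$, and the defining correspondence shows every epipolar line $F^{12}\mathbf{x}$ passes through a common point, namely this epipole, which forces $\ker(F^{12})^{\!*}$-type relations; concretely I would verify $\mathbf{x}^\top F^{12}(P_2 c_1)=\det[\,\cdots\,]=0$ for all $\mathbf{x}$ by noting the bordering column $\mathbf{y}=P_2 c_1$ makes the bottom block row linearly dependent, so $P_2 c_1\in\ker F^{12}$, and rank $2$ makes this the full kernel. Part (1) I would treat as largely a citation to the reconstruction theory already attributed to \cite{Hartley2004}: a rank $2$ matrix admits a canonical camera pair (e.g.\ via the epipole and a suitable factorization), and the gauge ambiguity $H\in\mathrm{PGL}_4$ follows because right-multiplication $P_i\mapsto P_iH$ leaves \eqref{eq: det} invariant up to the scalar $\det H$. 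The main obstacle I anticipate is the converse direction of part (4): showing skew-symmetry is not just necessary but \emph{sufficient} to identify $F^{12}$ with $\psi(P_1,P_2)$ requires a dimension count confirming that the linear map $F^{12}\mapsto P_1^\top F^{12}P_2$ sends the $9$-dimensional space of $3\times 3$ matrices onto (or isomorphically into) the $6$-dimensional space of skew-symmetric $4\times 4$ matrices with the correct kernel, and care is needed to ensure no spurious solutions arise from the rank constraints on the cameras.
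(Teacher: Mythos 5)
The paper never proves this proposition; it is imported verbatim from \cite[Section 9]{Hartley2004}, so there is no in-paper argument to compare yours against, and the only question is whether your proposal stands on its own. Much of it does. Part (2) is correct: the matrix defining $\psi(P_2,P_1)$ at $(\mathbf{y},\mathbf{x})$ is obtained from the one defining $\psi(P_1,P_2)$ at $(\mathbf{x},\mathbf{y})$ by a block row swap plus a swap of the two bordering columns, so the determinants agree up to a fixed sign and the bilinear forms are projectively transposes. Part (3) is correct and can be made precise: with $\mathbf{y}=P_2c_1$ and $P_1c_1=0$, the last bordering column of \eqref{eq: det} equals $\bigl(\begin{smallmatrix}P_1\\P_2\end{smallmatrix}\bigr)c_1$, a combination of the first four columns, so the determinant vanishes for every $\mathbf{x}$ and $P_2\ker P_1\subseteq\ker F^{12}$. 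To upgrade this containment to the stated equality you still owe $\mathrm{rank}\,F^{12}=2$, which you quote rather than prove (quick argument: $F^{12}\neq 0$ since the centers are distinct, and if $F^{12}=ab^\top$ had rank one then $P_1^\top F^{12}P_2=(P_1^\top a)(P_2^\top b)^\top$ would be a rank-one skew-symmetric matrix, hence zero, contradicting injectivity of $P_1^\top$ and $P_2^\top$). The forward half of part (4) works for the same column-dependence reason: at $\mathbf{x}=P_1\mathbf{v}$, $\mathbf{y}=P_2\mathbf{v}$, column five plus column six of \eqref{eq: det} equals $\bigl(\begin{smallmatrix}P_1\\P_2\end{smallmatrix}\bigr)\mathbf{v}$.

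The genuine gaps are the converse of part (4) and the uniqueness claim of part (1). For (4), the dimension count you propose cannot work as stated: $F\mapsto P_1^\top FP_2$ embeds the $9$-dimensional space $\RR^{3\times 3}$ into the $16$-dimensional space of $4\times 4$ matrices, and a generic $9$-dimensional subspace meets the $6$-dimensional space of skew-symmetric matrices only in $\{0\}$, since $9+6<16$; so no general-position or surjectivity argument can show the intersection is one-dimensional, and the claim must come from the special structure of camera pairs. A concrete repair: use the gauge invariance you already established to normalize $P_2=\begin{bmatrix}I&0\end{bmatrix}$, $P_1=\begin{bmatrix}A&t\end{bmatrix}$ with $t\neq 0$; then $P_1^\top FP_2=\bigl(\begin{smallmatrix}A^\top F&0\\t^\top F&0\end{smallmatrix}\bigr)$ is skew if and only if $A^\top F$ is skew and $t^\top F=0$. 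When $A$ is invertible, set $B=A^\top F$; this is a $3\times 3$ skew matrix with $B(A^{-1}t)=0$, which forces $B=\lambda[A^{-1}t]_\times$, so the solution space is one-dimensional; since it contains the nonzero element $\psi(P_1,P_2)$ by your forward direction, every such $F$ satisfies $F\sim\psi(P_1,P_2)$. (Singular $A$ is handled by solving the same two conditions directly.) For part (1), what you actually prove is only the easy inclusion: right multiplication by $\diag(H,I_2)$ scales \eqref{eq: det} by $\det H$, so $(P_1H,P_2H)$ has the same fundamental matrix. The substance of the statement is the converse---that \emph{every} pair $(C_1,C_2)$ with fundamental matrix $F^{12}$ arises this way---which requires classifying all solutions, e.g.\ normalizing $C_2=\begin{bmatrix}I&0\end{bmatrix}$, invoking the parametrization of \Cref{le: can sol}, and exhibiting the explicit $H$ relating any two members of that family. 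Deferring existence and this uniqueness to \cite{Hartley2004} is defensible---the paper itself does exactly that---but then part (1) is cited, not proved, and your write-up should say so rather than suggest the gauge computation yields it.
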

In absence of the original cameras, we define the $i$-th \textit{epipole} $e_j^i$ in the $j$-th image to be $\ker F^{ij}$, which is a single point in $\PP^2$. By \Cref{enum: epip}, $e_j^i$ is the image of the $i$-th camera center taken by the $j$-th camera, so this is consistent with our previous definition above. Although fundamental matrices and epipoles are only defined up to scale, i.e. as elements in projective space, when working with them, we usually assume that we are given affine representatives such that $(F^{ij})^\top =F^{ji}$, unless otherwise specified.

\begin{definition}
A set of fundamental matrices $\Set{F^{ij}}$ is \textnormal{compatible} if there exist cameras $P_1,\ldots,P_n$ such that $F^{ij}=\psi(P_i,P_j)$. The set of cameras $\Set{P_i}$ is called a \textnormal{solution} to $\Set{F^{ij}}$.
\end{definition}

For $n=2$, given any fundamental matrix there is a solution of cameras $P_1$ and $P_2$, which is unique up to global projective transformation. For the case $n=3$ with three fundamental matrices $F^{12},F^{13}$ and $F^{23}$, there generically is no solution. Indeed, a classical result \cite[Section 15.4]{Hartley2004} provides triplewise constraints on $F^{12},F^{13},F^{23}$ in terms of the fundamental matrices and their epipoles. In general, for a set of ${n\choose 2}$ fundamental matrices $\Set{F^{ij}}$, conditions have been given in terms of $n$-view matrices in \cite[Theorem 1]{kasten2019gpsfm} and \cite[Theorem 2]{geifman2020averaging}. More recently, in \cite{braatelund2023compatibility}, it was proven that quadruplewise compatibility implies global compatibility, and explicit homogeneous polynomials were provided in the case of $n=4$ via a computational algebra package. 

The main contribution of this paper is a geometric characterization of the compatibility conditions from \cite{braatelund2023compatibility}, and also the simplification of several arguments. In \cref{thm: 4tuple-condition} we show that compatibility in the case of $4$ images with cameras in general position is characterized via three epipolar lines intersecting in a single point. We first prove this statement geometrically and then re-derive the corresponding algebraic result from \cite{braatelund2023compatibility}. In \cref{thm: Case 2 general form} we show that compatibility in the case of $4$ images with coplanar cameras is characterized by two different geometric conditions, each concerning the mutual intersections of three lines; one of these conditions is the epipolar line condition from the above case and the other condition concerns lines in the world space. We again first prove this statement geometrically before re-deriving the corresponding algebraic result from \cite{braatelund2023compatibility}.

In \Cref{s: Pre}, we recall notation and previous work. In \Cref{s: Geo int 3}, we give geometric interpretations for triplewise conditions. In \Cref{s: Geo int 4} we give geometric interpretations for quadruplewise conditions.

\subsection*{Related work} Camera matrices are often assumed to be \textit{calibrated}, represented as $[R|t]$ for a rotation matrix $R$ and a translation vector $t$. The corresponding fundamental matrices are called \textit{essential matrices}. Compatibility for essential matrices has been shown by \cite{kasten2019algebraic} to provide a necessary and sufficient condition for compatibility of essential matrices, in terms of the $n$-view essential matrix. In \cite{martyushev2020necessary}, Martyushev provides explicit homogeneous polynomials that describe necessary and sufficient condition for compatibility of three essential matrices.

We finally note the relation to the question of \textit{solvability}. A viewing graph is considered \textit{solvable} if, given a generic set of cameras, their fundamental matrices have a unique solution in terms of cameras up to global projective transformation. Recent work on this topic include \cite{trager2018solvability,arrigoni2021viewing}. Furthermore, solvability has been investigated in the case of calibrated cameras, where it is known that the solvable graphs are precisely those that are parallel rigid \cite{ozyesil2015robust,sattler2016efficient}.

From the practical point of view, \cite{kasten2019gpsfm} proposed an algorithm for projective structure-from-motion that employs their necessary and sufficient condition for compatibility. The question of compatibility also arises in the study of critical configurations \cite{braatelund2021critical,HK}.

\paragraph{Acknowledgements.} Felix Rydell was supported by the Knut and Alice Wallenberg Foundation within their WASP (Wallenberg AI, Autonomous Systems and Software Program) AI/Math initiative. 

 \section{Preliminaries}\label{s: Pre} \numberwithin{equation}{section} In this section we establish notation used throughout our paper.
 Recall that given a vector $t\in \RR^3$, we define 
\begin{align}
    [t]_\times=\begin{bmatrix} 0 & -t_3& t_2\\
t_3 & 0 & -t_1\\ -t_2 &t_1 &0\end{bmatrix}.
\end{align}
With respect to the cross product $\times$ on $\RR^3\times \RR^3$, we have $t\times u=[t]_\times u$.

\begin{lemma}[{\hspace{1sp}\cite[Results 9.15]{Hartley2004}}]\label{le: can sol} Let $F^{12}$ be a fundamental matrix. Then
\begin{align}
    P_1=\begin{bmatrix}[e_1^2]_\times F^{12}+e_1^2v^\top & \lambda e_1^2\end{bmatrix}, \quad P_2=\begin{bmatrix}I&0\end{bmatrix}
\end{align}
is a solution of cameras for any $v$ and non-zero $\lambda$. Moreover this is an exhaustive list of solutions in $P_1$ for fixed $P_2=\begin{bmatrix}
    I & 0
\end{bmatrix}$.     
\end{lemma}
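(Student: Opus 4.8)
The plan is to reduce everything to the skew-symmetry criterion \Cref{prop:properties_of_the_fundamental_matrix}(4), which says that $\psi(P_1,P_2)\sim F^{12}$ exactly when $P_1^\top F^{12}P_2$ is skew-symmetric. Two facts make this criterion easy to apply here: first, since $e_1^2=\ker F^{21}=\ker (F^{12})^\top$, the epipole $e_1^2$ is a \emph{left} null vector of $F^{12}$, so $(e_1^2)^\top F^{12}=0$; second, $[t]_\times^\top=-[t]_\times$ for every $t$. Throughout I abbreviate $e:=e_1^2$ and $G:=F^{12}$.

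For the forward direction I would substitute $P_2=[I\mid 0]$ and $P_1=[A\mid\lambda e]$ with $A=[e]_\times G+e v^\top$ into $P_1^\top G P_2$. Because the right-hand factor is $[I\mid 0]$, the product has the block form
\begin{align*}
P_1^\top G P_2=\begin{bmatrix} A^\top G & 0\\ \lambda\, e^\top G & 0\end{bmatrix}.
\end{align*}
The bottom row vanishes since $e^\top G=0$, and for the same reason the $e v^\top$ summand of $A$ contributes nothing, leaving $A^\top G=-G^\top[e]_\times G$, which is skew-symmetric directly from $[e]_\times^\top=-[e]_\times$. Thus the $4\times 4$ matrix is skew-symmetric and \Cref{prop:properties_of_the_fundamental_matrix}(4) gives $\psi(P_1,P_2)\sim G$. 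It then remains to check that $P_1$ is genuinely a camera with center distinct from that of $P_2$; this can be read off from a short kernel computation showing $\ker P_1$ is one-dimensional and not equal to $\ker P_2=[0:0:0:1]$, using that $e$ is a left null vector of $G$ but not a column of $G$.

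For exhaustiveness I would prefer to invoke \Cref{prop:properties_of_the_fundamental_matrix}(1) rather than a dimension count. Fixing the particular solution $P_1^0=[[e]_\times G\mid e]$, $P_2=[I\mid 0]$ (the case $v=0,\lambda=1$), every other solution must be of the form $(P_1^0 H,\,P_2 H)$ for some $H\in\PGL_4$. Writing $H=\begin{bmatrix} K & k\\ m^\top & \mu\end{bmatrix}$, the requirement that the second camera stay fixed, $P_2H=[K\mid k]\sim[I\mid 0]$, forces $K=\alpha I$ and $k=0$ with $\alpha,\mu\neq 0$; substituting this $H$ into $P_1^0H$ returns $[\alpha[e]_\times G+e\,m^\top\mid \mu e]$, which up to the overall scalar $\alpha$ is exactly $[[e]_\times G+e v^\top\mid \lambda e]$ with $v=m/\alpha$ and $\lambda=\mu/\alpha\neq 0$. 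This recovers the whole family and nothing more.

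The main obstacle is the exhaustiveness step. Along the route above it amounts to correctly identifying the stabilizer of $[I\mid 0]$ inside $\PGL_4$ and keeping track of the projective scalings of both the cameras and $H$; the only real content is that $P_2H\sim P_2$ pins down the top-left block of $H$ up to scale while leaving the bottom row free. If one instead argues directly from the skew-symmetry criterion, the obstacle becomes a rank/dimension count: one must show that the linear space $\{A':(A')^\top G \text{ skew-symmetric}\}$ has dimension exactly $4$ and is spanned by $[e]_\times G$ together with the rank-one matrices $e v^\top$, so that the exhibited family is not merely a proper subset of all solutions.
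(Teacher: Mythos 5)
Your proof is correct, and it is worth noting up front that the paper itself contains no proof of this lemma at all: it is imported directly from \cite[Result 9.15]{Hartley2004}. So the meaningful comparison is with the textbook argument, which your proposal essentially reconstructs, but with the virtue of using only tools stated inside the paper. Sufficiency via \Cref{prop:properties_of_the_fundamental_matrix}(4) is exactly right: with $A=[e_1^2]_\times F^{12}+e_1^2v^\top$ one gets $A^\top F^{12}=-(F^{12})^\top[e_1^2]_\times F^{12}$ because $(e_1^2)^\top F^{12}=0$, and this is skew-symmetric since $[e_1^2]_\times^\top=-[e_1^2]_\times$. Exhaustiveness via \Cref{prop:properties_of_the_fundamental_matrix}(1) is also sound: the stabilizer computation is correct, since $[I\mid 0]H\sim[I\mid 0]$ together with invertibility of $H$ forces $H=\begin{bmatrix}\alpha I&0\\ m^\top&\mu\end{bmatrix}$ with $\alpha\mu\neq 0$, and then $P_1^0H=\begin{bmatrix}\alpha[e_1^2]_\times F^{12}+e_1^2m^\top\mid\mu e_1^2\end{bmatrix}$ recovers precisely the claimed family after dividing by $\alpha$. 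One caution: the step you leave as a sketch is not optional. Part (4) presupposes that $P_1,P_2$ are full-rank cameras with distinct centers, since $\psi$ is undefined otherwise; indeed $P_1=P_2=[I\mid 0]$ with $F=[t]_\times$ gives a skew-symmetric product while no fundamental matrix exists, so skew-symmetry alone proves nothing without that check. Your sketch does close the gap once written out: decomposing $P_1(x^\top,t)^\top=[e_1^2]_\times F^{12}x+e_1^2(v^\top x+t\lambda)$, the first summand is orthogonal to $e_1^2$ while the second is parallel to it, so both must vanish; since the column space of $F^{12}$ is orthogonal to $e_1^2$ and $\ker[e_1^2]_\times=\operatorname{span}(e_1^2)$, vanishing of the first summand forces $F^{12}x=0$, i.e.\ $x\in\operatorname{span}(e_2^1)$, and then $t$ is determined. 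Hence $\ker P_1$ is one-dimensional, $P_1$ has rank $3$, and the center differs from $\ker P_2=(0,0,0,1)^\top$. With that spelled out, both directions are complete.
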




An important tool for the study of compatibility is coordinate changes in both the world and image coordinates. This fact has been used for instance in \cite{martyushev2020necessary,braatelund2023compatibility}. To be precise, $\mathrm{PGL}_3^n$ (or equivalently $\mathrm{GL}_3^n$) acts on a set of fundamental matrices $\Set{F^{ij}}$ by
\begin{align}
    \Set{F^{ij}}\mapsto \Set{H_i^\top F^{ij}H_j}.
\end{align}
We call this the \textit{fundamental action} of $\mathrm{PGL}_3^n$. The main appeal of this action is that we can use it to simplify a set of fundamental matrices, without affecting compatibility:

\begin{proposition}[{\hspace{1sp}\cite[Proposition 2.1]{braatelund2023compatibility}}] 
\label{prop:fundamental_action_preserves_compibility} Let $\Set{F^{ij}}$ be a set of fundamental matrices. Let $P_i$ be a solution to $\Set{F^{ij}}$. For any $(H_1,\ldots,H_n,H)\in \mathrm{PGL}_3^n\times \mathrm{PGL}_4$, we have, 
\begin{align}\label{eq: fund}
\psi(H_i^{-1}P_iH,H_j^{-1}P_jH)\sim H_i^\top \psi(P_i,P_j)H_j.
\end{align} 
In particular, $\Set{F^{ij}}$ is compatible if and only if $\Set{G^{ij}}$ is compatible, where $G^{ij}:=H_i^\top F^{ij}H_j$. 
\end{proposition}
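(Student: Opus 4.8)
The plan is to prove the identity \Cref{eq: fund} directly from the determinantal definition of $\psi$ and then obtain the compatibility statement as a formal consequence. The key point is that $\psi(P,Q)$ is exactly the matrix representing the bilinear form $(\mathbf{x},\mathbf{y})\mapsto\det\left[\begin{smallmatrix}P&\mathbf{x}&0\\ Q&0&\mathbf{y}\end{smallmatrix}\right]$ of \Cref{eq: det}; hence verifying \Cref{eq: fund} reduces to tracking how this $6\times 6$ determinant behaves under the substitutions $P_i\mapsto H_i^{-1}P_iH$ and $P_j\mapsto H_j^{-1}P_jH$.

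First I would write down the defining determinant for the transformed cameras and clear all three transformations by block operations. Left-multiplying the $6\times 6$ matrix by the block-diagonal matrix $\diag(H_i,H_j)$ cancels $H_i^{-1}$ and $H_j^{-1}$ in the camera rows and converts the two auxiliary columns into $H_i\mathbf{x}$ and $H_j\mathbf{y}$, while right-multiplying by $\diag(H^{-1},I_2)$ cancels the trailing $H$ and leaves the auxiliary columns unchanged. By multiplicativity of the determinant this yields
\begin{align*}
\det\begin{bmatrix}H_i^{-1}P_iH&\mathbf{x}&0\\ H_j^{-1}P_jH&0&\mathbf{y}\end{bmatrix} &= \frac{\det(H)}{\det(H_i)\det(H_j)}\det\begin{bmatrix}P_i&H_i\mathbf{x}&0\\ P_j&0&H_j\mathbf{y}\end{bmatrix},
\end{align*}
a nonzero scalar multiple of the determinant on the right.

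Next I would read both sides as bilinear forms. The right-hand determinant equals $(H_i\mathbf{x})^\top\psi(P_i,P_j)(H_j\mathbf{y})=\mathbf{x}^\top\bigl(H_i^\top\psi(P_i,P_j)H_j\bigr)\mathbf{y}$ by the definition of $\psi$, whereas the left-hand determinant equals $\mathbf{x}^\top\psi(H_i^{-1}P_iH,H_j^{-1}P_jH)\,\mathbf{y}$. Since these agree for all $\mathbf{x},\mathbf{y}$ up to the recorded nonzero scalar, the associated $3\times 3$ matrices coincide projectively, which is \Cref{eq: fund}. For the final claim, if $\{P_i\}$ solves $\{F^{ij}\}$ then $Q_i:=H_i^{-1}P_iH$ satisfies $\psi(Q_i,Q_j)\sim H_i^\top F^{ij}H_j=G^{ij}$ by \Cref{eq: fund}, so $\{Q_i\}$ solves $\{G^{ij}\}$; the converse is identical after replacing $(H_1,\dots,H_n,H)$ with $(H_1^{-1},\dots,H_n^{-1},H^{-1})$ and using $(H_i^\top)^{-1}=(H_i^{-1})^\top$.

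Since the computation is essentially determinant bookkeeping, the main point is care rather than difficulty: I must check that the block operations produce precisely the displayed factorization (in particular that right-multiplication by $\diag(H^{-1},I_2)$ fixes the $\mathbf{x},\mathbf{y}$ columns), keep track of the scalar so that the conclusion is stated only up to $\sim$, and confirm that each $Q_i=H_i^{-1}P_iH$ is again a full-rank camera, with center $H^{-1}\ker P_i$, so that the centers remain distinct and each $\psi(Q_i,Q_j)$ is a genuine fundamental matrix.
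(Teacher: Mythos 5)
Your proof is correct. Note that the paper offers no proof of this proposition at all---it is imported verbatim from \cite[Proposition 2.1]{braatelund2023compatibility}---so there is no internal argument to compare against; your computation supplies a complete, self-contained justification. The determinant bookkeeping checks out: left-multiplying the $6\times 6$ matrix by $\diag(H_i,H_j)$ and right-multiplying by $\diag(H^{-1},I_2)$ turns the defining determinant for the transformed cameras into $\frac{\det(H)}{\det(H_i)\det(H_j)}$ times the determinant in \Cref{eq: det} with columns $H_i\mathbf{x}$ and $H_j\mathbf{y}$, and reading both sides as bilinear forms in $(\mathbf{x},\mathbf{y})$ gives \Cref{eq: fund} up to the recorded nonzero scalar, which is all that projective equality requires. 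Your closing observations are also exactly the points needed to make the ``in particular'' clause a genuine equivalence rather than a single implication: each $H_i^{-1}P_iH$ is again full rank with center $H^{-1}\ker P_i$, so distinctness of centers is preserved and $\psi$ stays defined on every pair, and the converse direction follows by applying the identity to $(H_1^{-1},\ldots,H_n^{-1},H^{-1})$ together with $(H_i^\top)^{-1}=(H_i^{-1})^\top$. One stylistic remark: since the statement quantifies over $\mathrm{PGL}$ while the computation is done with matrix representatives in $\mathrm{GL}$, it is worth a half-sentence observing that the scalar ambiguity in choosing representatives only rescales both sides of \Cref{eq: fund}, hence is absorbed by $\sim$; your proof implicitly does this correctly by stating the conclusion only projectively.
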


We refer to quantities of the form $\textbf{e}_{sijt}:=(e_i^s)^\top F^{ij}e_j^t$ as \textit{epipolar numbers}. These epipolar numbers will feature in many of our equations and can easily be checked to be invariant under the fundamental action. Next, we recall that it suffices to characterize compatibility for triplets and six-tuples of fundamental matrices.

\begin{theorem}[{\hspace{1sp}\cite[Theorem 3.15]{braatelund2023compatibility}}]
\label{thm:compatible_if_each_six-tuple_is_compatible}
Let $\Set{F^{ij}}$ be a complete set of $n\choose2$, $n\ge 4$, fundamental matrices such that 
for all $i,j,k,l$, the six-tuple $F^{ij},F^{ik},F^{jk},F^{il},F^{jl},F^{kl}$ is compatible. Then $\Set{F^{ij}}$ is compatible.

Moreover, if all epipoles in each image coincide, then triplewise compatibility implies that $\Set{F^{ij}}$ is compatible. The reconstruction in this case will be a set of cameras whose centers all lie on a line.
\end{theorem}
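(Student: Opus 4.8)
The plan is to prove both assertions by induction on the number of cameras, handling separately the generic case, in which the camera centers are not all collinear, and the degenerate case, in which they are. The backbone of the generic argument is a rigidity statement upgrading the two-view uniqueness of \Cref{prop:properties_of_the_fundamental_matrix}(1): if three cameras have non-collinear centers, then they are determined up to a single $H\in\PGL_4$ by their three fundamental matrices. I would prove this by using two-view uniqueness to align two of the cameras, noting that the center of the third is then pinned down by triangulating the two epipoles that are its images in the aligned views (the two rays are distinct precisely because the centers are non-collinear), and finally recovering the remaining entries of the third camera from the fundamental matrices via \Cref{le: can sol}. Two reductions streamline the rest. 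First, compatibility of a six-tuple restricts to compatibility of each of its triples, so six-tuple compatibility implies triplewise compatibility; consequently the collinear instance of the first assertion is subsumed by the second (``moreover'') assertion, and it remains to prove the first assertion when not all epipoles coincide. Second, by \Cref{prop:fundamental_action_preserves_compibility} we may normalize the data by the fundamental action throughout.

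For the generic case, relabel so that $c_1,c_2,c_3$ are non-collinear; such a triple exists exactly because two epipoles differ in some image. Triplewise compatibility together with the rigidity lemma produces cameras $P_1,P_2,P_3$ realizing $F^{12},F^{13},F^{23}$, unique once the global gauge is fixed. For each $k\ge 4$ the six-tuple $\{1,2,3,k\}$ is compatible; anchoring its solution to $P_1,P_2,P_3$ by rigidity yields a camera $P_k$ with $\psi(P_i,P_k)=F^{ik}$ for $i\in\{1,2,3\}$, and, as the argument below shows, $P_k$ is the unique camera compatible with $P_1,P_2,P_3$. It then remains to verify $\psi(P_k,P_l)=F^{kl}$ for all $k,l\ge 4$. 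For this I would pick $i,j\in\{1,2,3\}$ for which both triples $\{i,j,k\}$ and $\{i,j,l\}$ have non-collinear centers — possible since $c_k$ and $c_l$ each fail to be non-collinear with at most one of the three pairs — and use the compatible six-tuple $\{i,j,k,l\}$, anchored to the non-collinear triple $P_i,P_j,P_k$, to produce a camera $\tilde P_l$ with $\psi(P_i,\tilde P_l)=F^{il}$, $\psi(P_j,\tilde P_l)=F^{jl}$, and $\psi(P_k,\tilde P_l)=F^{kl}$.

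The crux, and the step I expect to be the main obstacle, is to show that this $\tilde P_l$ equals the previously constructed $P_l$, so that $\psi(P_k,P_l)=\psi(P_k,\tilde P_l)=F^{kl}$. Both cameras are compatible with $P_i$ and $P_j$, so by triangulation they share the same center, whence $\tilde P_l=MP_l$ for some $M\in\mathrm{GL}_3$. Compatibility with $P_i$ forces $F^{il}M^{-1}\sim F^{il}$, so $M^{-1}-\mu I$ has image inside $\ker F^{il}=\langle e_l^i\rangle$ for some scalar $\mu$; compatibility with $P_j$ gives the analogous statement with $\langle e_l^j\rangle$. Since $c_i,c_j,c_l$ are non-collinear, the epipoles $e_l^i=P_lc_i$ and $e_l^j=P_lc_j$ are distinct, and comparing the two rank conditions forces $M$ to be a scalar matrix, i.e. $\tilde P_l\sim P_l$. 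Assembling the $P_k$ completes the generic case, and the centers are non-collinear by construction.

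For the degenerate (``moreover'') case I would exploit the hypothesis that in each image all epipoles coincide. Using the fundamental action, send the common epipole of image $j$ to $[0:0:1]^\top$ for every $j$; then both the left and right kernels of each $F^{ij}$ are $[0:0:1]^\top$, so its third row and column vanish and $F^{ij}=\begin{bmatrix}G^{ij}&0\\0&0\end{bmatrix}$ with $G^{ij}\in\mathrm{GL}_2$ and $(G^{ij})^\top=G^{ji}$. Compatibility of a collinear triple then reduces to a transitivity (cocycle) relation among the $2\times 2$ blocks $G^{ij}$; triplewise compatibility is precisely this relation on every triangle, and a consistent cocycle on the complete graph is a coboundary $G^{ij}\sim g_ig_j^{-1}$, from which one reads off explicit cameras whose centers lie on a common line. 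The main obstacle here is the reduction itself: verifying that the block relation genuinely captures compatibility and that triplewise consistency is the exact cocycle condition, after which the global reconstruction — and the collinearity of its centers — follows by construction.
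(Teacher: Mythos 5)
This paper does not actually prove the statement you were given: it is imported wholesale as \cite[Theorem 3.15]{braatelund2023compatibility}, so there is no in-paper proof to compare against, and your proposal has to be judged on its own merits. On those merits it is correct, and it is pleasingly self-contained relative to this paper's toolkit: your ``rigidity lemma'' is exactly \Cref{lem:triple_has_unique_solution}, your triangulation of the third center is the recovery step inside the proof of \Cref{thm: non colin}, and your reduction of the collinear case to a transitivity relation among the $2\times 2$ blocks is precisely \Cref{thm: K3 colin geo} (after normalizing all epipoles to $(0,0,1)^\top$ by the fundamental action, the relation $F^{kj}\sim F^{ki}[e_i^j]_\times F^{ij}$ becomes $G^{kj}\sim G^{ki}JG^{ij}$ with $J$ the $2\times 2$ symplectic block, i.e.\ a genuine cocycle for $H^{ij}:=G^{ij}J$). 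Your crux step is sound: from $F^{il}M^{-1}\sim F^{il}$ and $F^{jl}M^{-1}\sim F^{jl}$ one gets that $M^{-1}-\mu I$ and $M^{-1}-\mu' I$ have rank at most one with images in $\langle e_l^i\rangle$ and $\langle e_l^j\rangle$ respectively; subtracting forces $\mu=\mu'$ (otherwise $(\mu-\mu')I$ would have rank at most $2$), and then the image of $M^{-1}-\mu I$ lies in $\langle e_l^i\rangle\cap\langle e_l^j\rangle=0$, so $M$ is scalar. The counting step is also fine, since pairwise distinctness of centers forces $c_k$ to lie on at most one of the three lines spanned by pairs of $c_1,c_2,c_3$. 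Two places are thinner than a final write-up should be, both in the degenerate case. First, the cocycle relations hold only up to scale, so your cocycle is valued in $\PGL_2$; the coboundary trivialization $H^{ij}\sim g_ig_j^{-1}$ with $g_i:=H^{i1}$ still works on the complete graph using $H^{ik}H^{ki}\sim I$, but this should be said. Second, ``one reads off explicit cameras'' needs an actual construction and verification: for instance, take $P_i$ with upper-left $2\times 2$ block $g_i^{-\top}$, last row $(0,0,\alpha_i,\beta_i)$ with the ratios $(\alpha_i:\beta_i)$ pairwise distinct, and zeros elsewhere; then $P_i^\top F^{ij}P_j$ has $g_j^{-1}J^{-1}g_j^{-\top}$ as its only nonzero block, which is skew-symmetric, so \Cref{prop:properties_of_the_fundamental_matrix}(4) certifies $F^{ij}\sim\psi(P_i,P_j)$, and the centers $(0,0,-\beta_i,\alpha_i)$ are collinear and pairwise distinct as required. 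With those two points filled in, your argument is a complete proof.
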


Finally, we define the following terminology: The \textit{back-projected line} of an image point $x\in \PP^2$ with respect to a camera $C$ with center $c$ is the line $C^{-1}(x)\cup\{p\}$. It is the span of $C^\dagger x$ and $p$, for any pseudo-inverse $C^\dagger$ such that $CC^\dagger=I$. For $\PP^n$, a projective \textit{frame} or \textit{basis} is a set of $n+2$ points such that no $n+1$ are contained in a hyperplane. A canonical choice is $p_1=(1,0,\ldots,0),p_{n+1}=(0,\ldots,0,1)$, and $p_{n+2}=(1,\ldots,1)$. It is a basic fact of projective geometry that given two projective frames $\{a_1,\ldots, a_{n+2}\}$ and $\{b_1,\ldots,b_{m+2}\}$ for $\PP^n$ and $\PP^m$ respectively, with $n\geq m$, there is a unique projective transformation $M$ such that $Ma_i=b_i$ for all $i=1\ldots,m$ and $Ma_i=0$ for all $i>m$.



\section{Geometric Interpretations for Triplewise Constraints}\label{s: Geo int 3} 

In this section we give geometric interpretations for triplewise conditions. Given a set of three cameras with pairwise distinct camera centers, we can characterize their geometry as fitting into one of two cases; either they are non-collinear or they are collinear. This geometry is also captured by the epipoles in each image. Indeed, by the fact that $e_i^j\sim P_i\ker P_j$, we can deduce what possible configuration of epipoles are may occur for compatible fundamental matrices. We formally define the cases as follows: 

\begin{enumerate}[leftmargin =8.8em]
    \item [Non-Collinear Case:] The cameras are in generic position, meaning no line contains all centers. Equivalently, in each image, the two epipoles are distinct.
    \item [Collinear Case:] All camera centers lie in a line. Equivalently, in each image, the two epipoles are coincident.
\end{enumerate}

\begin{theorem}[Non-Collinear Case]
\label{thm: non colin}
Let $F^{12}$, $F^{13}$, $F^{23}$ be fundamental matrices such that the two epipoles in each image are distinct. Then $\Set{F^{ij}}$ is compatible if and only if
\begin{align}
\label{eq_non_collinear}
(e_{1}^{3})^{T}F^{12}e_{2}^{3}=(e_{1}^{2})^{T}F^{13}e_{3}^{2}=(e_{2}^{1})^{T}F^{23}e_{3}^{1}=0.
\end{align}

Further, $(e_i^k)^\top F^{ij}e_j^k=0$ is equivalent to the condition that after reconstructing two of the cameras $C_i,C_j$, the third camera center $p_k$ can be recovered, i.e. the back-projected lines $C_i^{-1}e_i^k,C_j^{-1}e_j^k$ meet in a unique point away from the line spanned by the centers $p_i,p_j$ of $C_i,C_j$. 
\end{theorem}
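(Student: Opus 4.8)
The plan is to prove the theorem in two halves: first the algebraic equivalence, then the geometric reformulation.

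For the algebraic part, I would exploit the fundamental action (\Cref{prop:fundamental_action_preserves_compibility}) to reduce to a normal form. Since compatibility and the epipolar numbers $\textbf{e}_{sijt}$ are invariant under the fundamental action, I can first place $P_2,P_3$ (say) in a convenient gauge using \Cref{le: can sol}, for instance normalizing so that the centers become a projective frame. Because the two epipoles in each image are distinct (non-collinear hypothesis), the relevant epipoles are well-separated points, and the condition \eqref{eq_non_collinear} asserts that each of the three epipolar numbers $\textbf{e}_{312?}$, etc., vanishes. The forward direction (compatible $\Rightarrow$ \eqref{eq_non_collinear}) should be immediate: if genuine cameras $P_1,P_2,P_3$ exist, then $e_i^k \sim P_i \ker P_k$ by \Cref{enum: epip}, and $(e_1^3)^\top F^{12} e_2^3 = (P_1 c_3)^\top \psi(P_1,P_2)(P_2 c_3)$ is the epipolar incidence of the common world point $c_3$ as seen in images $1$ and $2$; since both are images of the \emph{same} world point $c_3$, they satisfy the epipolar constraint, forcing the product to vanish. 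The reverse direction is the substantive one: assuming the three epipolar numbers vanish, I would construct a compatible triple of cameras explicitly, building $P_1$ and $P_2$ from $F^{12}$ via \Cref{le: can sol} and then using the vanishing conditions to show $F^{13}$ and $F^{23}$ can be realized by a consistent third camera $P_3$.

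For the geometric reformulation, the claim is that $(e_i^k)^\top F^{ij} e_j^k = 0$ is equivalent to the two back-projected lines $C_i^{-1}e_i^k$ and $C_j^{-1}e_j^k$ meeting in a unique point off the baseline through $p_i,p_j$. I would argue this directly from the geometry of the epipolar constraint. Recall that $F^{ij}x = 0$ on the line $\{y : y^\top F^{ij} x = 0\}$, so the vanishing $(e_i^k)^\top F^{ij} e_j^k = 0$ says precisely that $e_i^k$ lies on the epipolar line in image $i$ corresponding to $e_j^k$ in image $j$, i.e. $e_i^k$ and $e_j^k$ are corresponding points under the epipolar correspondence of $F^{ij}$. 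By the standard theory of two-view geometry, two image points are epipolar-corresponding exactly when their back-projected lines are coplanar and hence meet in a single world point (the triangulated point). I would then note that this intersection point cannot lie on the baseline $\overline{p_i p_j}$: if it did, the back-projected lines would coincide with the baseline and the reconstructed point $p_k$ would be degenerate, contradicting the distinctness of the epipoles (non-collinear case). Thus the meeting point is the reconstructed third center $p_k$, recovering the geometric statement.

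The main obstacle I anticipate is the reverse direction of the algebraic equivalence, namely constructing the third camera $P_3$ from the vanishing conditions and verifying it simultaneously realizes both $F^{13}$ and $F^{23}$. After fixing $P_1,P_2$, the matrix $P_3$ is constrained by two fundamental-matrix equations, and one must check, using \Cref{enum: epip} of \Cref{prop:properties_of_the_fundamental_matrix}, that the epipolar conditions \eqref{eq_non_collinear} are exactly what is needed to make these two constraints consistent. The delicate point is ensuring the candidate $P_3$ has its center in general position (distinct epipoles), which is where the non-collinear hypothesis must be used; I expect this to require the observation that the two back-projected lines meeting off the baseline yields a genuine (non-baseline) center $p_3$, closing the loop between the algebraic and geometric formulations.
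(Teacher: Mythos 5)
Your forward implication and your geometric reformulation are essentially sound. The paper proves the triangulation fact by hand rather than citing two-view theory: it parametrizes $C_2^{-1}e_2^3$ with $C_2=\begin{bmatrix}I&0\end{bmatrix}$, reduces the intersection condition to $\det\begin{bmatrix}e_1^3& e_1^2 & [e_1^2]_\times F^{12}e_2^3\end{bmatrix}=([e_1^2]_\times^2e_1^3)^\top F^{12}e_2^3=0$, and uses that $[e_1^2]_\times^2e_1^3\in\Span\{e_1^2,e_1^3\}$ together with $(e_1^2)^\top F^{12}=0$; your appeal to the standard epipolar-correspondence criterion is a legitimate shortcut, and your off-baseline argument is correct once tightened (an intersection point on the baseline would force one of the two back-projected lines to contain two distinct points of the baseline, hence to equal it, contradicting distinctness of the epipoles).

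The genuine gap is the sufficiency direction: you never construct the third camera. Your proposal reduces to the statement ``use the vanishing conditions to show $F^{13}$ and $F^{23}$ can be realized by a consistent third camera,'' which is precisely the content of the theorem in that direction; flagging it as the anticipated obstacle does not discharge it. What is missing, and what the paper supplies, is the following chain: given any solution $C_1,C_2$ of $F^{12}$, recover $p_3$ by the geometric part; choose $X,Y$ so that $\{p_1,p_2,p_3,X,Y\}$ is a projective frame of $\PP^3$ and set $x_i=C_iX$, $y_i=C_iY$; show there is a \emph{unique} $x_3$ with $x_1^\top F^{13}x_3=x_2^\top F^{23}x_3=0$ (uniqueness fails exactly when $x_1^\top F^{13}\sim x_2^\top F^{23}$, which \Cref{eq_non_collinear} rules out because it forces $F^{13}e_3^2\sim e_1^2\times e_1^3$), and similarly $y_3$; prove that $\{e_3^1,e_3^2,x_3,y_3\}$ is a projective frame of $\PP^2$; define $\widetilde{C}_3$ as the unique camera with $p_3\mapsto 0$, $p_i\mapsto e_3^i$, $X\mapsto x_3$, $Y\mapsto y_3$; and finally verify $\psi(C_i,\widetilde{C}_3)\sim F^{i3}$ by checking that both sides, viewed as maps to epipolar lines, agree on that frame. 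Each step uses the hypotheses in an essential way, and none appears in your proposal. Your alternative route---writing $P_3$ down from \Cref{le: can sol} and checking that one matrix simultaneously realizes $F^{13}$ and $F^{23}$ via the skew-symmetry criterion (item 4 of \Cref{prop:properties_of_the_fundamental_matrix})---could in principle be completed, and is in fact how the paper handles the collinear case in \Cref{thm: K3 colin geo}; but you do not carry out that computation either, so as written the core of the proof is absent.
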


The fact that \Cref{eq_non_collinear} characterizes compatibility in the non-collinear case is well-known \cite[Section 15.4]{Hartley2004}. Our contribution is the last part of the statement, although we include a proof for the first statement as well for the sake of completeness. In our proofs, typically work with affine representatives of fundamental matrices, cameras, centers and epipoles, and we write $=$ to denote affine equality and $\sim$ to denote projective equality (equality up to non-zero scaling).

\begin{proof} We begin by showing the equivalence of the conditions for $(i,j)=(1,2)$. A standard reconstruction of $C_1,C_2$ is given by $C_1=\begin{bmatrix} [e_1^2]_\times F^{12}&e_1^2   
\end{bmatrix}$ and $C_2=\begin{bmatrix}I &0 \end{bmatrix}$ by \Cref{le: can sol}. Note that $[e_1^2]_\times F^{12}$ is rank 2 matrix and its kernel is $e_2^1$. The back-projected line $C_2^{-1}e_2^3$ can be parametrized as $\mu_0p_2+\mu_1 C_2^\dagger e_2^3$, where $p_2$ is the center of $C_2$ and $C_2^\dagger$ is any matrix such that $C_2C_2^\dagger=I$. In this case, we may put $C_2^\dagger=\begin{bmatrix}I &0 \end{bmatrix}^\top $, so that we get a parametrization
\begin{align}
    \mu_0(0,0,0,1)^\top +\mu_1 ((e_2^3)^\top ,0)^\top  \quad \textnormal{ for } \quad \mu=(\mu_0,\mu_1)\neq (0,0).
\end{align}
The back-projected line $C_1^{-1}e_1^3$ meets $C_2^{-1}e_2^3$ precisely when there is a solution in $\mu $ to 
\begin{align}
    C_1\big(\mu_0(0,0,0,1)^\top +\mu_1 ((e_2^3)^\top ,0)^\top \big)= e_1^3,
\end{align}
which is can be written $\mu_0e_1^2+\mu_1 [e_1^2]_\times  F^{12}e_2^3 e_1^3$. Since $e_1^2$ and $[e_1^2]_\times  F^{12}e_2^3$ are non-zero and linearly independent, that there exists a unique solution $(\mu_0,\mu_1)$ (with $\mu_1\neq 0$) is equivalent to the following determinantal expression:
\begin{equation}
\begin{vmatrix}
    \vert & \vert & \vert\\
    e_1^3& e_1^2 & [e_1^2]_\times F^{12}e_2^3 \\
    \vert & \vert & \vert
\end{vmatrix}=0
\end{equation}
However, the determinant equals $(e_1^3)^\top (e_1^2\times [e_1^2]_\times F^{12}e_2^3)$, which further equals $([e_1^2]_\times^2 e_1^3)^\top F^{12}e_2^3=0$. We note that $[e_1^2]_\times^2 e_1^3$ lies in the span of $e_1^2$ and $e_1^3$, showing that this identity is equivalent to $(e_1^3)^\top F^{12}e_2^3=0$. Further, the intersection point $X\in \PP^3$ of the lines $C_1^{-1}e_1^3$ and $C_2^{-1}e_2^3$ does not line in the span of $p_1$ and $p_2$. To see this, observe that $X$ satisfies $X\not\sim p_2$ (since $\mu_1\neq 0$ above) and $e_2^3=C_2X$. Because of $e_2^1\not \sim  e_2^3$, we deduce that $X$ does not lie in $C_2^{-1}e_2^1$; the line spanned by $p_1$ and $p_2$. 

$\Rightarrow)$ If a reconstruction of cameras exists, then each center can be recovered. More precisely, the intersection of $C_i^{-1}e_i^3$ and $C_j^{-1}e_j^3$ contains the center $p_3$. If $C_i^{-1}e_i^3=C_j^{-1}e_j^3$, then this would imply that the centers are collinear; a contradiction.

$\Leftarrow)$ Let $C_1,C_2$ be any solution to $F^{12}$ with centers $p_1,p_2$. By \Cref{eq_non_collinear} and the first part of the proof, the last center $p_3$ can be recovered. Let $X,Y$ be any points in $\PP^3$ that make a projective frame together with the centers $p_1,p_2$ and $p_3$. Define $x_i:=C_iX$ and $y_i:=C_iY$. It follows that $\{e_i^j,e_i^3,x_i,y_i\}$ is a projective frame for $\PP^2$, where $i=1,2$, and $i$ and $j$ are distinct. Next we argue that there are unique $x_3,y_3\in \PP^2$ such that 
\begin{align}
    x_1^\top F^{13}x_3=x_2^\top F^{23}x_3=0\quad  \textnormal{ and } \quad y_1^\top F^{13}y_3=y_2^\top F^{23}y_3=0.
\end{align}
We prove this for $x_3$. If there we no such unique $x_3$, then we would have $x_1^\top F^{13}\sim x_2^\top F^{23}$ and as a direct consequence $x_1^\top F^{13}e_3^2=0$. By \Cref{eq_non_collinear}, we have $F^{13}e_3^2\sim e_1^2\times e_1^3$, and we now get a contradiction: $x_1^\top (e_1^2\times e_1^3)=0$, which implies that $x_1$ lies in the span of $e_1^2$ and $e_1^3$. 

Next, we prove that $\{e_3^1,e_3^2,x_3,y_3\}$ is a projective frame of $\PP^2$. If $e_3^1,e_3^2$ and $x_3$ were collinear, then $x_i^\top F^{i3}x_3=0$ implies that $x_i^\top F^{i3}e_3^j=0$ for some distinct $i$ and $j$. As a consequence, this $x_i$ is in the span of $e_i^j$ and $e_i^3$, which is a contradiction. In particular, $e_3^j\neq x_3,y_3$. If say $e_3^1,x_3,y_3$ were collinear, then $F^{13}x_3\sim F^{13}y_3$. This however implies $x_1,y_1,e_1^3$ are collinear, which again is a contradiction, and $\{e_3^1,e_3^2,x_3,y_3\}$ must be a projective frame. 

There is now a unique full-rank $\widetilde{C}_3$ mapping $p_i$ to $e_3^i$, $X$ to $x_3$ and $Y$ to $y_3$. We denote the fundamental matrices of $C_1,C_2,\widetilde{C}_3$ by $F^{12},\widetilde{F}^{13},\widetilde{F}^{23}$, and prove that $\widetilde{F}^{ij}\sim F^{ij}$. We do the proof for $(i,j)=(1,3)$. Observe that
\begin{align}\begin{aligned}\label{eq: F tilde}
0=&\widetilde{F}^{13}e_3^1=F^{13}e_3^1,\quad & &e_1^3\times e_3^2\sim \widetilde{F}^{13}e_3^2\sim F^{13}e_3^2,\\
e_1^3\times x_1\sim&\widetilde{F}^{13}x_3\sim F^{13}x_3,\quad & &e_1^3\times y_1\sim\widetilde{F}^{13}y_3\sim F^{13}y_3,
\end{aligned}
\end{align}
and since $\{e_3^1,e_3^2,x_3,y_3\}$ is a projective frame, these identities uniquely define a projective transormation; $\widetilde{F}^{13}\sim F^{13}$, and we are done. 
\end{proof}

The next result is a geometric interpretation of \cite[Proposition 3.4]{braatelund2023compatibility} for the Collinear Case. 

\begin{theorem}[Collinear Case]\label{thm: K3 colin geo} Let $F^{12}$, $F^{13}$, $F^{23}$ be fundamental matrices such that the epipoles in each image are equal. Then $\Set{F^{ij}}$ is compatible if and only if (up to scaling)
\begin{align}\label{eq: collin epip}
F^{32}=F^{31}[e_1^2]_\times F^{12}.
\end{align}  

Further, \Cref{eq: collin epip} is equivalent to that for any reconstruction of $C_1,C_2$ with respect to $F^{12}$, and any point $X\in \PP^3$, the two epipolar lines $F^{31}C_1X$ and $F^{32}C_2X$ coincide. 
\end{theorem}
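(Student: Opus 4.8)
The plan is to dispatch the ``Further'' geometric statement first, since it is essentially a single computation that also powers one direction of the main biconditional, and then to prove the two directions of the main equivalence separately: one geometrically via that computation, and the converse by an explicit construction of a third camera.

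First I would prove the ``Further'' equivalence. Fix any reconstruction $C_1,C_2$ of $F^{12}$; by \Cref{le: can sol}, after a global projective transformation we may take $C_2=\begin{bmatrix}I&0\end{bmatrix}$ and $C_1=\begin{bmatrix}[e_1^2]_\times F^{12}+e_1^2v^\top & \lambda e_1^2\end{bmatrix}$. Writing $X=(x_2^\top,d)^\top$ so that $C_2X=x_2$, one gets $C_1X=[e_1^2]_\times F^{12}x_2+(v^\top x_2+\lambda d)\,e_1^2$. Applying $F^{31}$ and using the collinear hypothesis $e_1^2=e_1^3=\ker F^{31}$, the entire $e_1^2$-term dies, so $F^{31}C_1X=F^{31}[e_1^2]_\times F^{12}x_2$ while $F^{32}C_2X=F^{32}x_2$. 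Thus coincidence of the two epipolar lines for all $X$ reads $F^{31}[e_1^2]_\times F^{12}x_2\sim F^{32}x_2$ for all $x_2$; since both sides are nonzero for generic $x_2$ (as $F^{32}$ has rank $2$), this is equivalent to the matrix proportionality \Cref{eq: collin epip}. Note this argument is independent of the chosen $v,\lambda$, so the identity is reconstruction-independent.

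For the forward direction of the main claim, suppose $\Set{F^{ij}}$ is compatible with a solution $C_1,C_2,C_3$ having distinct collinear centers. For a world point $X$, the line $F^{31}C_1X$ passes through $e_3^1$ and $C_3X$, while $F^{32}C_2X$ passes through $e_3^2$ and $C_3X$. Collinearity gives $e_3^1=e_3^2$, so for generic $X$ (where $C_3X\neq e_3^1$) these are two lines through the same pair of distinct points and therefore coincide. Applying the ``Further'' equivalence to the reconstruction $C_1,C_2$ then yields \Cref{eq: collin epip}. For the converse I would construct a solution directly: take $C_2=\begin{bmatrix}I&0\end{bmatrix}$, $C_1=\begin{bmatrix}[e_1^2]_\times F^{12}&e_1^2\end{bmatrix}$, and $C_3=\begin{bmatrix}[e_3^2]_\times F^{32}&e_3^2\end{bmatrix}$, the last of which by \Cref{le: can sol} applied to the pair $(C_3,C_2)$ realizes $\psi(C_2,C_3)=F^{23}$ automatically. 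It then remains to force $\psi(C_3,C_1)=F^{31}$, for which, by part (4) of \Cref{prop:properties_of_the_fundamental_matrix}, it suffices that $C_3^\top F^{31}C_1$ be skew-symmetric. Expanding this $4\times4$ matrix in blocks, the collinear relations $F^{31}e_1^2=0$ (from $e_1^2=e_1^3$) and $(F^{13}e_3^2)^\top=e_3^{2\top}F^{31}=0$ (from $e_3^2=e_3^1$) annihilate the last row and last column, reducing the matrix to its top-left block $-F^{23}[e_3^2]_\times F^{31}[e_1^2]_\times F^{12}$. Invoking \Cref{eq: collin epip} in the form $F^{31}[e_1^2]_\times F^{12}=F^{32}$ converts this block into $-F^{23}[e_3^2]_\times F^{32}=-(F^{32})^\top[e_3^2]_\times F^{32}$, which is skew-symmetric because $A^\top S A$ is skew whenever $S$ is. Hence $\psi(C_3,C_1)\sim F^{31}$, and $C_1,C_2,C_3$ is a solution.

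The main obstacle is the block computation in the converse: the point is to observe that the collinear kernel relations eliminate not only every off-diagonal and corner term but even the free parameters of $C_3$, so that skew-symmetry of the full $4\times4$ matrix collapses to skew-symmetry of a single triple product, which the substitution $F^{31}[e_1^2]_\times F^{12}=F^{32}$ then exhibits through the $A^\top S A$ pattern with $S=[e_3^2]_\times$. A minor point to check is that the constructed centers are pairwise distinct, so that each $\psi(C_i,C_j)$ is genuinely defined and equals the given $F^{ij}$ up to scale rather than vanishing; this is immediate since each realized fundamental matrix is nonzero.
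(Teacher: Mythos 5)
Your ``Further'' computation and your forward direction are essentially identical to the paper's proof: the paper likewise reduces the epipolar-line condition to $F^{32}C_2=F^{31}C_1$ for the canonical reconstruction of \Cref{le: can sol}, and derives necessity from the fact that both lines $F^{3i}C_iX$ pass through $e_3^i$ and $C_3X$ with $e_3^1\sim e_3^2$. The converse, however, has a genuine gap, and it sits exactly at the point you dismissed as ``minor.'' With your choice $C_3=\begin{bmatrix}[e_3^2]_\times F^{32} & e_3^2\end{bmatrix}$, the center of $C_3$ is $((e_2^3)^\top,0)^\top$: writing a kernel vector as $(u^\top,t)^\top$, the image of $[e_3^2]_\times F^{32}$ lies in the plane orthogonal to $e_3^2$, which forces $t=0$ and then $u\sim\ker F^{32}=e_2^3$. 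By the same computation the center of $C_1=\begin{bmatrix}[e_1^2]_\times F^{12} & e_1^2\end{bmatrix}$ is $((e_2^1)^\top,0)^\top$. In the collinear case the hypothesis gives $e_2^1\sim e_2^3$, so your $C_1$ and $C_3$ have the \emph{same} center --- always, not just in an edge case. Consequently $\psi(C_1,C_3)$ is undefined (the determinant polynomial vanishes identically), the triple $C_1,C_2,C_3$ is not a solution, and skew-symmetry of $C_3^\top F^{31}C_1$ certifies nothing: part (4) of \Cref{prop:properties_of_the_fundamental_matrix} presupposes a pair of cameras with distinct centers. Your closing justification --- that distinctness is ``immediate since each realized fundamental matrix is nonzero'' --- is circular for the pair $(C_1,C_3)$: nothing has been realized for that pair yet, and with your choice nothing can be.

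The repair is small and is precisely why the paper carries the free parameters of \Cref{le: can sol} through the converse: take $C_3=\begin{bmatrix}[e_3^2]_\times F^{32}+e_3^2v^\top & \lambda e_3^2\end{bmatrix}$ with $v^\top e_2^3\neq 0$. Then the center of $C_3$ becomes $((e_2^3)^\top,\,-v^\top e_2^3/\lambda)^\top$, which is distinct from the center of $C_1$, and your block computation survives unchanged: in $C_3^\top F^{31}C_1$ the new terms are multiplied by $(e_3^2)^\top F^{31}=0$ (from $e_3^2\sim e_3^1$) or by $F^{31}e_1^2=0$ (from $e_1^2\sim e_1^3$), so the matrix again collapses to the skew block $-(F^{32})^\top[e_3^2]_\times F^{32}$ after substituting \Cref{eq: collin epip}. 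With that single modification your argument coincides with the paper's proof.
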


\begin{proof} We begin by showing the equivalence of the conditions. A standard reconstruction of $C_1,C_2$ is given by $C_1=\begin{bmatrix} [e_1^2]_\times F^{12}&e_1^2   
\end{bmatrix}$ and $C_2=\begin{bmatrix}I &0 \end{bmatrix}$ by \Cref{le: can sol}. That $F^{32}C_2X$ and $F^{31}C_1X$ are equal for each $X\in \PP^3$ is equivalent to $F^{32}C_2=F^{31}C_1$. However, 
\begin{align}
    F^{32}C_2=\begin{bmatrix}
        F^{32} & 0
    \end{bmatrix}, \quad   F^{31}C_1=\begin{bmatrix}
        F^{31}[e_1^2]_\times F^{12} & 0
    \end{bmatrix},
\end{align}
since $e_1^2=e_1^3$. Then up to scaling, we get $F^{32}=F^{31}[e_1^2]_\times F^{12}$. 

$\Rightarrow)$ We have $(e_3^i)^\top F^{3i}C_iX=0$ and $(C_3X)^\top F^{3i}C_iX=0$ for each $i$ and $X$. As a consequence, for $X\in \PP^3$ away from the the line spanned by the centers, $F^{3i}C_iX\sim e_3^i\times (C_3X)\neq 0$. Since $e_3^1\sim e_3^2$, it follows that $F^{31}C_1X\sim F^{32}C_2X$, and this equality must hold for all $X$.

$\Leftarrow)$ By \Cref{le: can sol}, setting $C_3=\begin{bmatrix} [e_3^2]_\times F^{32}+e_3^2v^\top &\lambda e_3^2 
\end{bmatrix}$ with $C_1,C_2$ as above, we are left to show that $C_1,C_3$ is a solution to $F^{13}$. By assumption of $F^{32}=F^{31}[e_1^2]_\times F^{12}$ (and therefore  $F^{23}=-F^{21}[e_1^2]_\times F^{13}$),
\begin{align}
   C_1^\top F^{13}C_3=\begin{bmatrix}
        -F^{21}[e_1^2]_\times \\ (e_1^2)^\top 
    \end{bmatrix}F^{13}\begin{bmatrix}
        [e_3^2]_\times F^{32} +e_3^2v^\top & \lambda e_3^2
    \end{bmatrix} =\begin{bmatrix}
        F^{23}[e_3^2]_\times F^{32} & F^{23}e_3^2\\
        (e_1^2)^\top F^{13}[e_3^2]_\times F^{32} & (e_1^2)^\top F^{13}e_2^3
    \end{bmatrix}.
\end{align}
By the fact that the epipoles in each image coincide and \Cref{eq: collin epip}, this matrix is skew-symmetric which proves the statement.
\end{proof}


The following observation comes directly from the proofs:

\begin{corollary}\label{lem:triple_has_unique_solution}
    Let $\Set{F^{12},F^{13},F^{23}}$ be compatible. If $C_1,C_2$ is a solution to $F^{12}$, then there is a unique solution of the third camera $C_3$ only in the Non-Collinear Case. In other words, there is a unique solution for the cameras $C_1,C_2,C_3$, up to $\mathrm{PGL}_4$ action, if and only if the two epipoles in each image are distinct.
\end{corollary}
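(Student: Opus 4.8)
The plan is to reduce both formulations in the statement to a single question: for a fixed solution $C_1,C_2$ of $F^{12}$, how many cameras $C_3$ complete it to a solution of the triple? The engine of the reduction is that fixing the pair $(C_1,C_2)$ eliminates all projective gauge freedom. First I would note, using \Cref{prop:properties_of_the_fundamental_matrix} together with \Cref{prop:fundamental_action_preserves_compibility} applied with $H_i=H_j=I$ (so that a global $H\in\mathrm{PGL}_4$ preserves every fundamental matrix), that any solution of the triple is $\mathrm{PGL}_4$-equivalent to one whose first two cameras are exactly $C_1,C_2$. Then I would show that the stabilizer of a pair of cameras with distinct centers is trivial: after normalizing to the standard pair $C_2=\begin{bmatrix}I&0\end{bmatrix}$, $C_1=\begin{bmatrix}[e_1^2]_\times F^{12}&e_1^2\end{bmatrix}$ of \Cref{le: can sol}, the condition $C_2H\sim C_2$ forces $H=\bigl[\begin{smallmatrix}\mu I&0\\ c^\top&d\end{smallmatrix}\bigr]$, and then $C_1H\sim C_1$ together with $\rank [e_1^2]_\times F^{12}=2$ forces $c=0$ and $d=\mu$, i.e.\ $H\sim I$. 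Consequently two solutions sharing the first two cameras are $\mathrm{PGL}_4$-equivalent if and only if their third cameras coincide, so "uniqueness up to $\mathrm{PGL}_4$" is exactly "uniqueness of $C_3$ given $C_1,C_2$."

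For the Non-Collinear Case I would read uniqueness of $C_3$ off the proof of \Cref{thm: non colin}. The center of any admissible $C_3$ is $\ker C_3$, and by \Cref{enum: epip} its images under $C_1,C_2$ are $e_1^3$ and $e_2^3$; as the two epipoles in each image are distinct, the back-projected lines $C_1^{-1}e_1^3$ and $C_2^{-1}e_2^3$ are distinct and meet in the single point $p_3$, so the center is forced. Choosing $X,Y$ so that $p_1,p_2,p_3,X,Y$ is a projective frame of $\PP^3$, the images $C_3X,C_3Y$ are forced by the epipolar constraints $(C_iX)^\top F^{i3}(C_3X)=0$, which the proof shows pin them down uniquely; hence $C_3$ is determined on a projective frame and is unique. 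By the reduction above, the whole tuple is then unique up to $\mathrm{PGL}_4$.

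For the Collinear Case I would exhibit non-uniqueness straight from \Cref{le: can sol} and the proof of \Cref{thm: K3 colin geo}: with $C_1,C_2$ fixed, every camera $C_3=\begin{bmatrix}[e_3^2]_\times F^{32}+e_3^2v^\top & \lambda e_3^2\end{bmatrix}$ is a valid completion, and distinct $(v,\lambda)$ give projectively distinct cameras, since $[e_3^2]_\times F^{32}$ has rank $2$ whereas the perturbation $e_3^2v^\top$ has rank at most $1$. Hence $C_3$ is not unique, and by the trivial-stabilizer reduction these completions represent genuinely different solutions of the triple modulo $\mathrm{PGL}_4$.

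The step I expect to be the main obstacle is the reduction in the first paragraph. The two sentences of the statement are "the same in other words" only because fixing $(C_1,C_2)$ rigidifies the gauge, so the real substance is the triviality of the stabilizer of $(C_1,C_2)$; without it one could not rule out that the family of admissible $C_3$ in the collinear case is a single $\mathrm{PGL}_4$-orbit. Once that computation is in place, the two directions are immediate readings of the proofs of \Cref{thm: non colin} and \Cref{thm: K3 colin geo}.
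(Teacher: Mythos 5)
Your proposal is correct and matches the paper's approach: the paper derives this corollary directly from the proofs of \Cref{thm: non colin} and \Cref{thm: K3 colin geo}, exactly as you do---uniqueness of $C_3$ in the non-collinear case via the forced center $p_3$ and the projective-frame argument, and non-uniqueness in the collinear case via the $(v,\lambda)$-family of completions from \Cref{le: can sol}. The only addition is your explicit trivial-stabilizer computation showing that fixing $(C_1,C_2)$ removes all $\mathrm{PGL}_4$ gauge freedom; the paper leaves this reduction implicit, and your rank argument (rank-$2$ versus rank-$1$ forcing $H\sim I$) is a correct way to make it rigorous.
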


\section{Geometric Interpretations for Quadruplewise  Constraints}\label{s: Geo int 4}

In this section we give geometric interpretations for quadruplewise conditions. Given a set of four cameras, we characterize the geometry of their camera centers as fitting into one of four cases, and as in \Cref{s: Geo int 3}, this geometry is also captured by the epipoles in each image:
\begin{enumerate}[leftmargin =3.5em]
    \item [Case 1:] The cameras are in generic position, meaning no plane contains all four centers. In each image, the three epipoles are in generic position, meaning they do not lie on a line.
    \item [Case 2:] All camera centers lie in the same plane, but no three lie on a line. In each image, the three epipoles are distinct and lie on a line.
    \item [Case 3:] Precisely three camera centers lie on a line. In the three corresponding images, the epipoles corresponding to the other two cameras among this triplet are equal, with the last one different from these two. In the final image, the three epipoles are distinct and lie on a line. 
    \item [Case 4:] All four camera centers lie on a line. In each image, the three epipoles coincide.
\end{enumerate}

In \cite{braatelund2023compatibility}, quadruplewise compatibility conditions were characterized for each of these cases. Their proof strategy was to firstly simplify the fundamental matrices via the fundamental action, and secondly to deduce possible reconstructions of cameras for these simplified matrices, and to use this information to deduce a necessary and sufficient conditions. We give new proofs for these conditions based on geometric arguments. 

\subsection{Case 1}

Recall the definition of the epipolar numbers $\textbf{e}_{sijt}=(e_i^s)^\top F^{ij}e_j^t$.

\begin{theorem}[Case 1] 
\label{thm: 4tuple-condition} Let $\Set{F^{ij}}$ be a six-tuple of fundamental matrices such that the three epipoles in each image do not lie on a line. Then $\Set{F^{ij}}$ is compatible if and only if the triplewise conditions hold and
\begin{align}
\begin{aligned} \label{eq: 4-tuple}
\textnormal{\textbf{e}}_{4123}\textnormal{\textbf{e}}_{2134}\textnormal{\textbf{e}}_{3142}\textnormal{\textbf{e}}_{4231}\textnormal{\textbf{e}}_{1243}\textnormal{\textbf{e}}_{2341}=\textnormal{\textbf{e}}_{3124}\textnormal{\textbf{e}}_{4132}\textnormal{\textbf{e}}_{2143}\textnormal{\textbf{e}}_{1234}\textnormal{\textbf{e}}_{3241}\textnormal{\textbf{e}}_{1342}.
\end{aligned}
\end{align}

Further, \Cref{eq: 4-tuple} is equivalent to the condition that given a reconstruction $C_1,C_2,C_3$ of the first three cameras and any point $X\in\PP^3$, the three epipolar lines $F^{41}C_1X, F^{42}C_2X, F^{43}C_3X$ have a unique shared intersection.
\end{theorem}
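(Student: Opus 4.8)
The plan is to establish the theorem in two logically separate pieces, mirroring the structure of the earlier triplewise results. First I would prove the geometric equivalence — that Equation \eqref{eq: 4-tuple} is equivalent to the three epipolar lines $F^{41}C_1X$, $F^{42}C_2X$, $F^{43}C_3X$ sharing a unique common point for a generic (equivalently, every) $X\in\PP^3$ — and then use this geometric characterization to drive the compatibility statement, exactly as the proof of \Cref{thm: non colin} used the back-projected line intersection to reconstruct the missing camera.

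Let me address the geometric equivalence first, since it is the concrete engine. I would fix the standard reconstruction $C_1,C_2,C_3$ of the first three cameras guaranteed by the assumed triplewise conditions (which hold by hypothesis, so by \Cref{lem:triple_has_unique_solution} this reconstruction is essentially unique up to $\mathrm{PGL}_4$). For a point $X\in\PP^3$, each line $F^{4i}C_iX\subset\PP^2$ is a covector in $\PP((\RR^3)^*)$, and three lines in $\PP^2$ meet in a common point exactly when these three covectors are linearly dependent, i.e. when the $3\times 3$ determinant
\begin{align}
D(X):=\det\begin{bmatrix}
(F^{41}C_1X)^\top\\ (F^{42}C_2X)^\top\\ (F^{43}C_3X)^\top
\end{bmatrix}
\end{align}
vanishes. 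Each row is linear in $X$, so $D(X)$ is a cubic form in the four coordinates of $X$; the content of the statement is that this cubic vanishes identically precisely when \eqref{eq: 4-tuple} holds. I would first note that $D$ always vanishes on a large set: the three lines automatically share the epipole-type point coming from the back-projected line of $X$ through each camera when $X$ lies on certain epipolar planes, so $D$ factors through several linear forms. The key computation is to evaluate $D$ at a well-chosen projective frame of $\PP^3$ — the natural choice being the four camera centers $p_1,p_2,p_3$ together with a fourth independent point, or better the points whose images are the epipoles — and to read off that the ratio of the two monomials in \eqref{eq: 4-tuple} is exactly the obstruction to $D\equiv 0$. This is where the explicit epipolar numbers $\textbf{e}_{sijt}$ enter: evaluating the rows at these special points replaces the matrix entries by epipolar numbers, and expanding the determinant along the two cyclic patterns of indices produces precisely the left- and right-hand monomials of \eqref{eq: 4-tuple}. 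I expect this determinant bookkeeping — tracking which of the twelve epipolar numbers appear and with which sign in the two terms of the Laplace/Leibniz expansion — to be the main obstacle, both because the cyclic index patterns are delicate and because one must argue that generic vanishing of the cubic is equivalent to vanishing at finitely many frame points once the automatic linear factors are divided out. Using the invariance of the $\textbf{e}_{sijt}$ under the fundamental action (noted after \Cref{prop:fundamental_action_preserves_compibility}) I can first normalize $C_1,C_2,C_3$ into the canonical form of \Cref{le: can sol} to make these evaluations tractable.

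With the geometric equivalence in hand, I would then close the compatibility argument in the familiar two directions. For $(\Rightarrow)$, if $\Set{F^{ij}}$ is compatible then a genuine fourth camera $C_4$ exists, and for every $X\in\PP^3$ the three lines $F^{4i}C_iX$ are all epipolar lines in the fourth image through the common image point $C_4X$, hence concurrent; this forces $D\equiv 0$ and thus \eqref{eq: 4-tuple}. For $(\Leftarrow)$, assuming the triplewise conditions and \eqref{eq: 4-tuple}, the geometric equivalence gives, for each $X$, a well-defined point $x_4(X)\in\PP^2$ as the common intersection; I would show $X\mapsto x_4(X)$ is (the restriction of) a camera $C_4$ by checking it is a projective-linear map — evaluating it on a projective frame $X_1,\dots,X_5$ of $\PP^3$ and invoking the uniqueness clause of the frame principle recalled at the end of \Cref{s: Pre}, exactly as $\widetilde C_3$ was produced in the proof of \Cref{thm: non colin}. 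Finally I would verify that this $C_4$ induces the prescribed $F^{4i}$ for $i=1,2,3$, using the skew-symmetry criterion of \Cref{prop:properties_of_the_fundamental_matrix}(4), so that $C_1,C_2,C_3,C_4$ is a genuine solution; the remaining pairings are supplied by the triplewise compatibility already assumed among the lower-index matrices. The one point requiring care here is ruling out degeneracy — that the common intersection point $x_4(X)$ does not collapse onto an epipole for generic $X$ — which is precisely where the Case 1 hypothesis that the three epipoles in each image are not collinear is used, paralleling the role of the distinct-epipole hypothesis in \Cref{thm: non colin}.
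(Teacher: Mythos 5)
Your two-part skeleton (a geometric equivalence plus reconstruction of a fourth camera) matches the paper's, and your $\Rightarrow$ direction is exactly the paper's argument; but both of your key steps have genuine gaps. The more serious one is the reconstruction step in the $\Leftarrow$ direction. You propose to define $x_4(X)$ as the common intersection and show that $X\mapsto x_4(X)$ is projective-linear "by evaluating it on a projective frame $X_1,\dots,X_5$ and invoking the uniqueness clause of the frame principle." This fails twice over. The map $x_4(X)\sim (F^{41}C_1X)\times(F^{42}C_2X)$ is \emph{quadratic} in $X$, so linearity cannot be read off from finitely many values; and five generic point correspondences do not determine a camera at all (they impose $10$ conditions on the $11$-dimensional space of cameras -- a camera is pinned down by its center together with the images of four points, not by five generic images). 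So your frame must contain the fourth center $p_4$, and recovering $p_4$ is a step entirely absent from your proposal: the paper does this first, in \Cref{lem:case 1 geometry}, by intersecting the back-projected lines $C_i^{-1}e_i^4$ using \Cref{thm: non colin}. Even granting $p_4$, the camera handed to you by the frame principle still needs the verification $\psi(C_4,C_i)\sim F^{4i}$, and checking skew-symmetry of $C_i^\top F^{i4}C_4$ from five prescribed values is not a computation you have set up. The paper's route avoids all of this: by \Cref{lem:triple_has_unique_solution}, triplewise compatibility supplies, for each pair $i,j\in\{1,2,3\}$, a genuine camera $C_4^{ij}$ with center $p_4$ whose fundamental matrices with $C_i,C_j$ are correct \emph{by construction}; the triplewise conditions and the epipolar-line condition then force all three cameras to agree on the frame $p_1,p_2,p_3,p_4,X$, hence to coincide, so no linearity argument and no fundamental-matrix verification is ever needed.

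The second gap is in your algebraic step, and you flag it yourself: you want to show that the cubic $D(X)$ vanishes \emph{identically} if and only if \Cref{eq: 4-tuple} holds, and you admit you do not know how to pass from vanishing at finitely many points to identical vanishing "once the automatic linear factors are divided out." The paper's structure makes this issue evaporate: \Cref{lem:case 1 geometry} shows that concurrency at a \emph{single} point $X$ projectively independent of $p_1,\dots,p_4$ already implies compatibility (and compatibility conversely gives concurrency for every $X$), so only the value of the determinant at one point is ever needed. To make that single evaluation produce \Cref{eq: 4-tuple}, the paper uses the explicit symmetric reconstruction of $C_1,C_2,C_3$ in \Cref{le: C1C2C3}, whose columns are epipoles weighted by epipolar numbers and for which $p_4=(0,0,0,1)$, takes $X=(1,1,1,1)$, and multiplies the matrix of the three lines on the left by $\begin{bmatrix} e_4^1 & e_4^2 & e_4^3\end{bmatrix}^\top$: the diagonal of the product vanishes because $(e_4^i)^\top F^{4i}=0$, and the determinant of the resulting zero-diagonal matrix is exactly the difference of the two triple products in \Cref{eq: 4-tuple}. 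Your alternative normalization by the two-camera canonical form of \Cref{le: can sol} does not yield this symmetric three-camera form, and that explicit form is precisely what makes the "determinant bookkeeping" you are worried about tractable.
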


We refer to the latter condition as the \textit{epipolar line condition}. For the sake of readability, we state and prove two lemmas before we prove \Cref{thm: 4tuple-condition}. Below, we work with affine representatives of all fundamental matrices, cameras, centers and epipoles, and we write $=$ to denote affine equality and $\sim$ to denote projective equality (equality up to non-zero scaling).

\begin{lemma}\label{lem:case 1 geometry}
Let $\Set{F^{ij}}$ be a six-tuple of fundamental matrices such that the three epipoles in each image do not lie on a line and the triplewise conditions hold. Suppose that $C_1,C_2,C_3$ is a reconstruction of the first three cameras and let $p_1,p_2,p_3$ denote their centers. The fourth camera center $p_4$ can be recovered, i.e. the back-projected lines $C_1^{-1}e_1^4,C_2^{-1}e_2^4,C_3^{-1}e_3^4$ meet in a unique point away from the plane spanned by $p_1,p_2,p_3$.  

If $X\in\PP^3$ is any point projectively independent of $p_1,p_2,p_3,p_4$, then $\Set{F^{ij}}$ is compatible if and only if the three epipolar lines $F^{41}C_1X, F^{42}C_2X, F^{43}C_3X$ have a unique shared intersection. 
\end{lemma}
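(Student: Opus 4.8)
The plan is to treat the two assertions of \Cref{lem:case 1 geometry} separately, reducing the recovery of $p_4$ to \Cref{thm: non colin} together with a concurrency argument, and the compatibility equivalence to a direct construction of the fourth camera from a projective frame.

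For the recovery of $p_4$, I first note that the hypothesis that the three epipoles in each image avoid a common line forces, for every pair $i,j\in\{1,2,3\}$, the epipoles of the triple $\{i,j,4\}$ to be pairwise distinct in all three of its images; hence each such triple lies in the Non-Collinear Case. Applying the equivalence in \Cref{thm: non colin} to the given reconstruction $C_i,C_j$ shows that each pair of back-projected lines $C_i^{-1}e_i^4$ and $C_j^{-1}e_j^4$ meets in a unique point off the line $\langle p_i,p_j\rangle$. Since three lines in $\PP^3$ that meet pairwise are either concurrent or coplanar, it remains to exclude coplanarity. If the three lines lay in a common plane $\Pi$, then $\Pi=\langle p_1,p_2,p_3\rangle$; as $\Pi$ contains the center $p_i$, its image $C_i(\Pi)$ is a single line in the $i$-th image containing $e_i^j=C_ip_j$ and $e_i^k=C_ip_k$, and $C_i^{-1}e_i^4\subset\Pi$ would force $e_i^4\in C_i(\Pi)$ as well, placing all three epipoles of image $i$ on one line, a contradiction. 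Thus the lines are concurrent at a point $p_4$, and the same reasoning gives $p_4\notin\Pi$, since otherwise $C_i^{-1}e_i^4=\langle p_i,p_4\rangle\subset\Pi$.

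For the compatibility equivalence, recall that $F^{4i}C_iX$ is the epipolar line in the fourth image of the point $C_iX$. In the forward direction, if $\Set{F^{ij}}$ is compatible via a camera $C_4$ with center $p_4$, then $X$ images to $C_4X$ in the fourth camera; since $X$ projects to $C_iX$ in camera $i$, the point $C_4X$ lies on every line $F^{4i}C_iX$, and projective independence of $X$ from $p_1,\dots,p_4$ gives $C_4X\neq e_4^i$, so the three lines are not all equal and share exactly the point $C_4X$. For the converse I would build the fourth camera directly: using the projective frame $p_1,p_2,p_3,X$ of $\PP^3$ with center $p_4$, define the unique camera $\widetilde C_4$ with $p_i\mapsto e_4^i$ for $i=1,2,3$, with $X\mapsto z$ (the common intersection of the three epipolar lines), and with $p_4\mapsto 0$. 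One then verifies $\psi(\widetilde C_4,C_i)\sim F^{4i}$ by comparing the two $3\times 3$ matrices on the projective frame $\{e_i^4,e_i^j,e_i^k,C_iX\}$ of the $i$-th image: both kill $e_i^4$ (the kernel being $C_ip_4$), both send $e_i^j,e_i^k$ to the lines $e_4^i\times e_4^j,\,e_4^i\times e_4^k$, and both send $C_iX$ to $e_4^i\times z$, exactly as in the frame argument used for \Cref{thm: non colin}. Agreement on a frame forces $\psi(\widetilde C_4,C_i)\sim F^{4i}$, so $C_1,C_2,C_3,\widetilde C_4$ is a solution.

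The main obstacle I anticipate is the genericity bookkeeping in the converse: checking that $\{e_4^1,e_4^2,e_4^3,z\}$ and each $\{e_i^4,e_i^j,e_i^k,C_iX\}$ are genuine projective frames, that is, that $z$ is not collinear with two of the fourth-image epipoles and that $C_iX$ avoids the relevant joins of epipoles. These reductions should follow from the general position of the epipoles and the independence of $X$ from the four centers, but they are the delicate steps; everything else reuses the reconstruction machinery already established for \Cref{thm: non colin}.
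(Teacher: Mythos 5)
Your proposal is correct, and its first half (the recovery of $p_4$) together with the forward implication coincide with the paper's own argument; the genuine divergence is in the converse. The paper never constructs the fourth camera from its action on a frame: instead it invokes \Cref{lem:triple_has_unique_solution} to produce, for each pair $i,j\in\{1,2,3\}$, the unique camera $C_4^{ij}$ completing $C_i,C_j$ to a solution of $\{F^{ij},F^{i4},F^{j4}\}$ --- these cameras have the correct fundamental matrices by construction --- and then proves $C_4^{12}=C_4^{13}=C_4^{23}$ by checking that all three act identically on the frame $\{p_1,p_2,p_3,p_4,X\}$: each kills $p_4$, sends $p_k$ to $e_4^k$ (the third index via the triplewise conditions), and sends $X$ to the common intersection (via the epipolar line condition). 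You instead define $\widetilde C_4$ directly by its values on that frame and must then verify $\psi(\widetilde C_4,C_i)\sim F^{4i}$ by frame agreement inside each image; in effect you re-run, in the four-view setting, the sufficiency argument the paper establishes once in \Cref{thm: non colin} and afterwards accesses only through triplewise compatibility. The trade-off: the paper's gluing needs the uniqueness statement of \Cref{lem:triple_has_unique_solution} but gets the fundamental-matrix identities for free, whereas your construction is more self-contained but concentrates the work in the ``delicate'' frame checks you defer. Those checks do go through, but note they require the triplewise conditions, not merely general position: for instance, to exclude $z\in\langle e_4^1,e_4^2\rangle$ one observes this would force $(e_4^2)^\top F^{41}C_1X=0$ (or $(e_4^1)^\top F^{42}C_2X=0$ in the degenerate case $z=e_4^1$), and the triplewise identity $(e_1^2)^\top F^{14}e_4^2=0$ identifies the line $F^{14}e_4^2$ with the span of $e_1^4$ and $e_1^2$, so that $C_1X$ lying on it means $X\in\langle p_1,p_2,p_4\rangle$, contradicting independence; the frames $\{e_i^4,e_i^j,e_i^k,C_iX\}$ are handled by the same mechanism. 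Finally, a small slip in your forward direction: the three epipolar lines cannot all coincide because each line $F^{4i}C_iX$ contains its own epipole $e_4^i$ and the three epipoles are non-collinear; the inequality $C_4X\neq e_4^i$ (equivalently $C_iX\neq e_i^4$) is what makes the lines well defined and nonzero, not what prevents their coincidence.
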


\begin{proof} We have by \Cref{thm: non colin}, that for distinct $i,j\in \{1,2,3\}$, $C_i^{-1}e_i^4$ meets $C_j^{-1}e_j^4$ in a unique point $p_{ij}\in \PP^3$ such that $p_i,p_j,p_{ij}$ are non-collinear. We note that each $p_{ij}$ must lie outside the span $P$ of $p_1,p_2,p_3$, since if $p_{ij}\in P$ then $e_i^4=C_ip_{ij}$ would be collinear with $e_i^j$ and $e_i^k$, for $i,j,k$ distinct among $\{1,2,3\}$, which would contradict the fact that the epipoles are non-collinear in each image.

Since $p_{ij}$ lies outside $P$, the back-projected line $C_i^{-1}e_i^4$ cannot lie inside $P$. Then there is a unique point $p_4$ meeting each $C_i^{-1}e_i^4$ away from $P$ by the following fact: If three lines $L_1,L_2,L_3$ go through fixed non-collinear points $a_1,a_2,a_3\in \PP^3$ and meet pairwise, then they must have a unique common intersect unless some $L_i$ lies in the plane spanned by $a_i$.

$\Rightarrow)$ If $\Set{F^{ij}}$ is compatible, then there exists a camera $C_4$ such that $\{C_i\}_{i=1}^4$ has the appropriate fundamental matrices. In particular, $(C_4X)^\top F^{4i}C_iX=0$ for all $i=1,2,3$. The intersection point $C_4X$ of the lines defined by $(F^{4i}C_iX)^\top$ must be unique, because otherwise all $F^{4i}C_iX$ coincide, but the epipoles $e_4^i$ are not collinear. 

$\Leftarrow)$ Suppose the lines $F^{41}C_1X, F^{42}C_2X, F^{43}C_3X$ intersect uniquely in some point $x_4$, as in \cref{fig:lines}. 
Let $i,j\in \{1,2,3\}$ be distinct. Given the two cameras $C_i,C_j$, triplewise compatibility implies that there exist unique cameras $C_4^{ij}$, with center $p_4$, such that their fundamental matrices are $F^{ij},F^{i4},F^{j4}$. It suffices to show that $C_4^{12}=C_4^{13}=C_4^{23}$, because then by construction $F^{ij}$, for $i,j\in \{1,2,3,4\}$, are the fundamental matrices to $C_1,C_2,C_3$ and (up to scaling) $C_4:=C_4^{12}=C_4^{13}=C_4^{23}$. The cameras $C_4^{ij}$ satisfy $C_4^{ij}p_4=0$ and $C_4^{ij}p_k\sim e_4^k$ for $k=i,j$ by construction. Next, $C_4^{ij}p_k\sim e_4^k$ for $k\in \{1,2,3\}\setminus\{i,j\}$ by the triplewise conditions and $C_4^{ij}X\sim x_4$ by the epipolar line condition. 
Up to change of coordinates, we may assume $p_1=(1,0,0,0),\ldots,p_4=(0,0,0,1)$ and $X=(1,1,1,1)$. These make a projective frame of $\PP^3$ and there can only be one matrix $\widetilde{C}_4$ satisfying these equalities; all $C_4^{ij}$ are equal up to scaling. 
\end{proof}

\begin{figure}\centering
\fbox{%
\begin{picture}(5,5)
\put(1,3){{\circle*{0.1}}}
\put(0.9,3.2){\hbox{\kern3pt \footnotesize\texttt{$e_4^1$}}}
\put(2,0.75){{\circle*{0.1}}\hbox{\kern3pt \footnotesize\texttt{$e_4^2$}}}
\put(3.5,1.5){{\circle*{0.1}}\hbox{\kern3pt \footnotesize\texttt{$e_4^3$}}}
\thinlines
\put(1,3){\line(1,0.2){4}}
\put(1,3){\line(-1,-0.2){1}}
\put(2,0.75){\line(1,2){2.125}}
\put(2,0.75){\line(-1,-2){0.375}}
\put(3.5,1.5){\line(1,-0.5){1.5}}
\put(3.5,1.5){\line(-1,0.5){3.5}}
\end{picture}}
\quad
\fbox{%
\begin{picture}(5,5)
\put(1,3){{\circle*{0.1}}}
\put(0.9,3.2){\hbox{\kern3pt \footnotesize\texttt{$e_4^1$}}}
\put(2,0.75){{\circle*{0.1}}\hbox{\kern3pt \footnotesize\texttt{$e_4^2$}}}
\put(3.5,1.5){{\circle*{0.1}}\hbox{\kern3pt \footnotesize\texttt{$e_4^3$}}}
\put(2.642,3.32){{\circle*{0.1}}}
\put(2.644,3.15){\hbox{\kern3pt \footnotesize\texttt{$x_4$}}}
\thinlines
\put(1,3){\line(1,0.2){4}}
\put(1,3){\line(-1,-0.2){1}}
\put(2,0.75){\line(0.5,2){1.065}}
\put(2,0.75){\line(-0.5,-2){0.19}}
\put(3.5,1.5){\line(0.235,-0.5){0.71}}
\put(3.5,1.5){\line(-0.235,0.5){1.65}}
\end{picture}}
\caption{Left: Generically, the three epipolar lines have no common intersection. Right: If the three epipolar lines share a common intersection $x_4$, then there exists a camera $C_4$ yielding the desired fundamental matrices.}\label{fig:lines}
\end{figure}
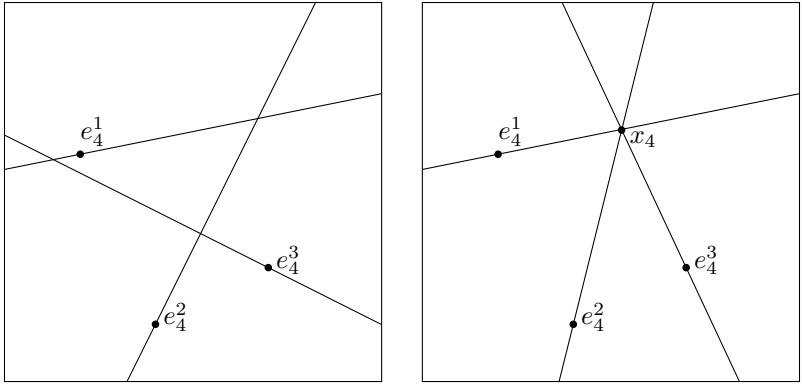

This statement, in conjunction with the following Lemma, will be enough to prove \Cref{thm: 4tuple-condition}.
\begin{lemma}\label{le: C1C2C3} Let $\Set{F^{ij}}$ be a six-tuple of fundamental matrices such that the three epipoles in each image do not lie on a line and the triplewise conditions hold. We can reconstruct the first three cameras as
    \begin{equation}\label{eq: C1C2C3}
\begin{split}
&C_1=\begin{bmatrix}
    \vert & \vert & \vert & \vert \\
    0 & \textnormal{\textbf{e}}_{3124}e_1^2 & -\textnormal{\textbf{e}}_{4123}e_1^3 & \textnormal{\textbf{e}}_{3124}e_1^4\\
    \vert & \vert & \vert & \vert
\end{bmatrix},\quad\quad
C_2=\begin{bmatrix}
    \vert & \vert & \vert & \vert \\
    -\textnormal{\textbf{e}}_{4231}e_2^1 & 0 & \textnormal{\textbf{e}}_{1234}e_2^3 & \textnormal{\textbf{e}}_{1234}e_2^4\\
    \vert & \vert & \vert & \vert
\end{bmatrix},\\
&C_3=\begin{bmatrix}
    \vert & \vert & \vert & \vert \\
    \textnormal{\textbf{e}}_{4132}e_3^1 & -\textnormal{\textbf{e}}_{2134}e_3^2 & 0 & \textnormal{\textbf{e}}_{4132}e_3^4\\
    \vert & \vert & \vert & \vert
\end{bmatrix}.
\end{split}
\end{equation}
For these cameras, $p_4=(0,0,0,1)$ is the unique point satisfying with $C_ip_4\sim e_i^4$.
\end{lemma}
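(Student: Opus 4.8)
The plan is to read the geometry directly off the matrices in \Cref{eq: C1C2C3} and then verify the fundamental-matrix conditions through the skew-symmetry criterion of \Cref{prop:properties_of_the_fundamental_matrix}(4). First I would note that the $i$-th column of $C_i$ is zero, so each $C_i$ has center $p_i$ equal to the standard basis vector of $\RR^4$, and that for $j\neq i$ the $j$-th column is a scalar multiple of $e_i^j$; hence $C_i p_j\sim e_i^j$ as soon as the corresponding scalar is nonzero. In particular the fourth column of each $C_i$ is a multiple of $e_i^4$, which will yield the final claim about $p_4=(0,0,0,1)$ once the scalars are shown to be nonzero.

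Second, I would establish that all six scalars appearing in \Cref{eq: C1C2C3} are nonzero. The key identity is that for distinct $i,j,k$ the epipolar line $F^{ij}e_j^k$ passes through $e_i^j$ (since $(e_i^j)^\top F^{ij}=0$) and, by the relevant equation of \Cref{eq_non_collinear} for the triple containing $i,j,k$, also through $e_i^k$; thus $F^{ij}e_j^k\sim e_i^j\times e_i^k$, using that $F^{ij}e_j^k\neq 0$ because the epipoles in image $j$ are distinct. Each scalar then has the form $(e_i^m)^\top F^{ij}e_j^k\sim (e_i^m)^\top(e_i^j\times e_i^k)=\det[\,e_i^m\mid e_i^j\mid e_i^k\,]$ for three distinct epipoles in one image, which is nonzero precisely by the non-collinearity hypothesis. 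This simultaneously shows each $C_i$ is full rank (its three nonzero columns are nonzero multiples of the linearly independent epipoles in image $i$) and confirms $C_i p_4\sim e_i^4$.

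Third, the heart of the lemma, I would verify $\psi(C_i,C_j)\sim F^{ij}$ for each pair by checking that $C_i^\top F^{ij}C_j$ is skew-symmetric. For the pair $(1,2)$, the rows and columns indexed by the zero columns of $C_1,C_2$ and by the kernel directions ($e_1^2\in\ker(F^{12})^\top$ and $e_2^1\in\ker F^{12}$) vanish automatically, leaving only a $2\times 2$ block in rows and columns $\{3,4\}$. Its diagonal entries are, up to nonzero scalars, $(e_1^3)^\top F^{12}e_2^3=\textbf{e}_{3123}$ and $(e_1^4)^\top F^{12}e_2^4=\textbf{e}_{4124}$, which vanish by the triplewise conditions for $\{1,2,3\}$ and $\{1,2,4\}$ respectively; the off-diagonal entries come out as $-\textbf{e}_{4123}\textbf{e}_{1234}\textbf{e}_{3124}$ and $\textbf{e}_{3124}\textbf{e}_{1234}\textbf{e}_{4123}$, which are negatives of one another by the deliberate choice of scalars. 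The pairs $(1,3)$ and $(2,3)$ are handled identically, the surviving diagonal entries being annihilated by the triplewise conditions for $\{1,2,3\},\{1,3,4\}$ and for $\{1,2,3\},\{2,3,4\}$ respectively. Then \Cref{prop:properties_of_the_fundamental_matrix}(4) gives $\psi(C_i,C_j)\sim F^{ij}$ for all $i,j\in\{1,2,3\}$.

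Finally, for the statement on $p_4$: the second step already gives $C_i p_4\sim e_i^4$, and uniqueness follows because any $q$ with $C_iq\sim e_i^4$ lies on the back-projected line $C_i^{-1}e_i^4$, and by \Cref{lem:case 1 geometry} these three lines meet in a single point. I expect the main obstacle to be purely bookkeeping: tracking which kernel relation or which of the four triplewise conditions annihilates each matrix entry, and confirming the off-diagonal skew pattern for all three pairs. Once the identity $F^{ij}e_j^k\sim e_i^j\times e_i^k$ is in hand, both the non-vanishing of the scalars and the skew-symmetry fall out cleanly.
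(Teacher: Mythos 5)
Your proposal is correct and takes essentially the same route as the paper's proof: the identity $F^{ij}e_j^k\sim e_i^j\times e_i^k$ (from the triplewise conditions) gives non-vanishing of the epipolar numbers and full rank exactly as in the paper, and your entry-by-entry skew-symmetry check of $C_i^\top F^{ij}C_j$ is just the polarized form of the paper's verification that $(C_ip)^\top F^{ij}C_jp=0$ for every $p$, using the same cancellations $\textnormal{\textbf{e}}_{3123}=\textnormal{\textbf{e}}_{4124}=\textnormal{\textbf{e}}_{2132}=\textnormal{\textbf{e}}_{4134}=\textnormal{\textbf{e}}_{1231}=\textnormal{\textbf{e}}_{4234}=0$ and the matching off-diagonal products. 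The final claim about $p_4=(0,0,0,1)$ is also handled identically, by appealing to \Cref{lem:case 1 geometry}.
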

\begin{proof} Firstly, we note that each epipolar number appearing in these matrices are non-zero. This can be seen by the triplewise conditions, $(e_i^s)^\top F^{ij}e_j^t=(e_i^s)^\top (e_i^j\times e_i^t)$. In particular, the matrices are all full-rank.

It can be verified that these cameras have the appropriate fundamental matrices. To do so, let $p=(a,b,c,d)\in\PP^3$ be any arbitrary point and first consider the cameras $C_1,C_2$. Then
\begin{equation}
\begin{split}
(C_1p)^\top F^{12}C_2p&=(-c\textnormal{\textbf{e}}_{4123}e_1^3+d\textnormal{\textbf{e}}_{3124}e_1^4)^\top F^{12}(c\textnormal{\textbf{e}}_{1234}e_2^3+d\textnormal{\textbf{e}}_{1234}e_2^4)\\
&=-cd\textnormal{\textbf{e}}_{4123}\textnormal{\textbf{e}}_{1234}\textnormal{\textbf{e}}_{3124}+cd\textnormal{\textbf{e}}_{3124}\textnormal{\textbf{e}}_{1234}\textnormal{\textbf{e}}_{4123}\\
&=0.
\end{split}
\end{equation}
Similarly, for the cameras $C_1,C_3$ we have
\begin{equation}
\begin{split}
(C_1p)^\top F^{13}C_3p&=(b\textnormal{\textbf{e}}_{3124}e_1^2+d\textnormal{\textbf{e}}_{3124}e_1^4)^\top F^{13}(-b\textnormal{\textbf{e}}_{2134}e_3^2+d\textnormal{\textbf{e}}_{4132}e_3^4)\\
&=b\textnormal{\textbf{e}}_{3124}\textnormal{\textbf{e}}_{4132}\textnormal{\textbf{e}}_{2134}-bd\textnormal{\textbf{e}}_{3124}\textnormal{\textbf{e}}_{2134}\textnormal{\textbf{e}}_{4132}\\
&=0.
\end{split}
\end{equation}
Finally, for the cameras $C_2,C_3$ it holds that
\begin{equation}
\begin{split}
(C_2p)^\top F^{23}C_3p&=(-a\textnormal{\textbf{e}}_{4231}e_2^1+d\textnormal{\textbf{e}}_{1234}e_2^4)^\top F^{23}(a\textnormal{\textbf{e}}_{4132}e_3^1+d\textnormal{\textbf{e}}_{4132}e_3^4)\\
&=-ad\textnormal{\textbf{e}}_{4231}\textnormal{\textbf{e}}_{4132}\textnormal{\textbf{e}}_{1234}+ad\textnormal{\textbf{e}}_{1234}\textnormal{\textbf{e}}_{4132}\textnormal{\textbf{e}}_{4231}\\
&=0.
\end{split}
\end{equation}
Therefore, these cameras are valid choices for reconstruction.

The fact that there is such a unique point $p_4$ follows from \Cref{lem:case 1 geometry}, and by inspection is must be the point $(0,0,0,1)$.
\end{proof}

\begin{proof}[Proof of \Cref{thm: 4tuple-condition}] We use \Cref{le: C1C2C3} to assume that $C_1,C_2,C_3$ take the form of \Cref{eq: C1C2C3}, and note that $X=(1,1,1,1)$ is projectively independent of $p_1,p_2,p_3,p_4$. By \Cref{lem:case 1 geometry} the six-tuple $\Set{F^{ij}}$ is compatible if and only if the lines $F^{41}C_1X$, $F^{42}C_2X$ and $F^{43}C_3X$ have a shared intersection. Equivalently, the six-tuple is compatible if and only if
\begin{equation}
\begin{vmatrix}
    \vert & \vert & \vert\\
    F^{41}C_1X & F^{42}C_2X & F^{43}C_3X\\
    \vert & \vert & \vert
\end{vmatrix}=0
\end{equation}
Because the $e_4^i$ are not collinear, we can multiply the determinant of $\begin{bmatrix}
   e_4^1 & e_4^2 & e_4^3 
\end{bmatrix}^\top$ to get the equivalent condition
\begin{equation}
\begin{vmatrix}
    (e_4^1)^\top F^{41}C_1X & (e_4^1)^\top F^{42}C_2X & (e_4^1)^\top F^{43}C_3X\\
    (e_4^2)^\top F^{41}C_1X & (e_4^2)^\top F^{42}C_2X & (e_4^2)^\top F^{43}C_3X\\
    (e_4^3)^\top F^{41}C_1X & (e_4^3)^\top F^{42}C_2X & (e_4^3)^\top F^{43}C_3X
\end{vmatrix}=0
\end{equation}
This simplifies to
\begin{equation}
\begin{split}
&\begin{vmatrix}
    0 & (e_4^1)^\top F^{42}\textnormal{\textbf{e}}_{1234}e_2^3 & -(e_4^1)^\top F^{43}\textnormal{\textbf{e}}_{2134}e_3^2\\
    -(e_4^2)^\top F^{41}\textnormal{\textbf{e}}_{4123}e_1^3 & 0 & (e_4^2)^\top F^{43}\textnormal{\textbf{e}}_{4132}e_3^1\\
    (e_4^3)^\top F^{41}\textnormal{\textbf{e}}_{3124}e_1^2 & -(e_4^3)^\top F^{42}\textnormal{\textbf{e}}_{4231}e_2^1 & 0
\end{vmatrix}\\
=&\begin{vmatrix}
    0 & \textnormal{\textbf{e}}_{1423}\textnormal{\textbf{e}}_{1234} & -\textnormal{\textbf{e}}_{1432}\textnormal{\textbf{e}}_{2134}\\
    -\textnormal{\textbf{e}}_{2413}\textnormal{\textbf{e}}_{4123} & 0 & \textnormal{\textbf{e}}_{2431}\textnormal{\textbf{e}}_{4132}\\
    \textnormal{\textbf{e}}_{3412}\textnormal{\textbf{e}}_{3124} & -\textnormal{\textbf{e}}_{3421}\textnormal{\textbf{e}}_{4231} & 0
\end{vmatrix}\\
=&0
\end{split}
\end{equation}
which can be seen to be exactly \Cref{eq: 4-tuple}.
\end{proof}

\subsection{Case 2}

\begin{theorem}[Case 2]\label{thm: Case 2 general form} Let $\Set{F^{ij}}$ be a six-tuple of fundamental matrices whose epipoles in each image are distinct and lie on a line. Let $e_i^5$ be arbitrary points linearly independent of the other epipoles in the $i$-th image. Then $\Set{F^{ij}}$ is compatible if and only if the triplewise conditions hold,
\begin{equation}\label{eq:case 2 short}\begin{split}
\textnormal{\textbf{e}}_{ijk5}\textnormal{\textbf{e}}_{ikl5}\textnormal{\textbf{e}}_{ilj5}+\textnormal{\textbf{e}}_{ikj5}\textnormal{\textbf{e}}_{ijl5}\textnormal{\textbf{e}}_{ilk5}=0,
\end{split}\end{equation}
for each $i$ and $j<k<l$, and
    \begin{equation}\label{eq:case 2 long}\begin{split}
&\textnormal{\textbf{e}}_{3245}\textnormal{\textbf{e}}_{2315}\textnormal{\textbf{e}}_{2415}\textnormal{\textbf{e}}_{1325}\textnormal{\textbf{e}}_{1435}\textnormal{\textbf{e}}_{5125}-\textnormal{\textbf{e}}_{2345}\textnormal{\textbf{e}}_{3215}\textnormal{\textbf{e}}_{2415}\textnormal{\textbf{e}}_{1325}\textnormal{\textbf{e}}_{1425}\textnormal{\textbf{e}}_{5135}-\textnormal{\textbf{e}}_{3215}\textnormal{\textbf{e}}_{2315}\textnormal{\textbf{e}}_{1325}\textnormal{\textbf{e}}_{1425}\textnormal{\textbf{e}}_{2435}\textnormal{\textbf{e}}_{5145}\\
-&\textnormal{\textbf{e}}_{1345}\textnormal{\textbf{e}}_{3215}\textnormal{\textbf{e}}_{2315}\textnormal{\textbf{e}}_{2415}\textnormal{\textbf{e}}_{1425}\textnormal{\textbf{e}}_{5235}-\textnormal{\textbf{e}}_{3215}\textnormal{\textbf{e}}_{2315}\textnormal{\textbf{e}}_{2415}\textnormal{\textbf{e}}_{1325}\textnormal{\textbf{e}}_{1435}\textnormal{\textbf{e}}_{5245}+\textnormal{\textbf{e}}_{3215}\textnormal{\textbf{e}}_{2315}\textnormal{\textbf{e}}_{2415}\textnormal{\textbf{e}}_{1325}\textnormal{\textbf{e}}_{1425}\textnormal{\textbf{e}}_{5345}=0.
\end{split}\end{equation}
The points $e_i^5$ may be chosen independently for each of the five equations.

Further, \Cref{eq:case 2 short} is equivalent to the condition that after reconstructing the cameras $C_i,C_j,C_k$ the camera center $p_l$ can be recovered, i.e. the back-projected lines $C_i^{-1}e_i^l,C_j^{-1}e_j^l,C_k^{-1}e_k^l$ have a unique shared intersection, and \Cref{eq:case 2 long} is equivalent to the epipolar line condition from \Cref{thm: 4tuple-condition}.
\end{theorem}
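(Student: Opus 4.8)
The plan is to mirror the structure of the Case 1 arguments (\Cref{lem:case 1 geometry}, \Cref{le: C1C2C3}, and the proof of \Cref{thm: 4tuple-condition}), adapting each step to the coplanar geometry. Write $\Pi\cong\PP^2$ for the common plane containing the four centers. The decisive structural fact is that $\Pi$ passes through every center $p_m$, so under each camera $C_m$ it maps to a line $\ell_m$ in the $m$-th image, and the three genuine epipoles $e_m^i,e_m^j,e_m^k$ all lie on $\ell_m$; this is exactly the hypothesis that the epipoles in each image are collinear. Consequently the back-projected line $C_m^{-1}e_m^l$ of a point on $\ell_m$ lies inside $C_m^{-1}\ell_m=\Pi$, so every reconstruction and all three back-projected lines of the fourth center live inside the single plane $\Pi$. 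The auxiliary points $e_m^5$, chosen off $\ell_m$, supply the coordinate direction transverse to $\ell_m$ that is needed to locate the collinear epipoles, which is the role played by the index $5$ in every epipolar number of \Cref{eq:case 2 short} and \Cref{eq:case 2 long}.

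For the first equivalence, fix indices so that $C_i,C_j,C_k$ are the reconstructed cameras (their centers are non-collinear, so by \Cref{lem:triple_has_unique_solution} the triple has a unique joint reconstruction up to $\mathrm{PGL}_4$) and $p_l$ is to be recovered. Each back-projected line $C_m^{-1}e_m^l$ with $m\in\{i,j,k\}$ passes through $p_m$ and lies in $\Pi$, so recovering $p_l$ is precisely the question of whether three lines in $\Pi\cong\PP^2$, one through each of the non-collinear points $p_i,p_j,p_k$, are concurrent. This is a Ceva-type condition: writing each line by its coefficient vector in a fixed frame of $\Pi$, and using the frame given by two true epipoles on $\ell_m$ together with $e_m^5$ to express the coordinate of $e_m^l$ along $\ell_m$, the $3\times 3$ determinant expressing concurrency factors, after removing the epipolar numbers that are nonzero by the triplewise conditions, into the difference of the two cyclic triple products of \Cref{eq:case 2 short}. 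I would first extract the Ceva/cross-ratio form to explain the cyclic-versus-anticyclic product pattern and only then confirm the bookkeeping. One must also check that the vanishing of \Cref{eq:case 2 short} is independent of the choice of $e_m^5$; this holds because the ambiguity in each $e_m^5$ lies along $\ell_m$, and its contributions cancel by the triplewise relations, leaving only an overall nonzero rescaling.

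For the second equivalence I would repeat the proof of \Cref{thm: 4tuple-condition} almost verbatim: reconstruct $C_1,C_2,C_3$ (now via a coplanar-compatible reconstruction, an explicit form being the Case 2 analogue of \Cref{le: C1C2C3}), choose $X\in\PP^3$ off the plane $\Pi$ so that it completes a frame with three non-collinear centers, and write the epipolar line condition that $F^{41}C_1X,F^{42}C_2X,F^{43}C_3X$ be concurrent as the vanishing of $\det[\,F^{41}C_1X\mid F^{42}C_2X\mid F^{43}C_3X\,]$. In Case 1 one cleared this determinant by multiplying by $\det[e_4^1\ e_4^2\ e_4^3]^\top\neq 0$; in Case 2 the three epipoles $e_4^1,e_4^2,e_4^3$ are collinear, so that matrix is singular and the clean simplification of \Cref{thm: 4tuple-condition} is unavailable. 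The replacement is to expand in the basis $\{e_4^1,e_4^2,e_4^5\}$, with the auxiliary point restoring a full basis of the fourth image; carrying out this expansion and collecting terms is what produces the six-term expression of \Cref{eq:case 2 long}, in contrast to the single balanced product of \Cref{eq: 4-tuple}.

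The main obstacle, and the place where real effort is required, is this second equivalence: because the trick of multiplying by a nonsingular epipole matrix is blocked by the collinearity of the epipoles, the determinant must be expanded in the mixed basis involving $e_4^5$, and matching the resulting polynomial term-by-term to \Cref{eq:case 2 long} is a lengthy and error-prone computation. The conceptual reductions, namely passing to the plane $\Pi$ for \Cref{eq:case 2 short} and to a single concurrency determinant for \Cref{eq:case 2 long}, are straightforward; the danger lies entirely in the symbolic expansion and in verifying that both conditions are genuinely independent of the auxiliary choices $e_m^5$.
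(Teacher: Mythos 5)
Your two reductions are individually sound and in fact match the paper's strategy: your Ceva-type concurrency argument inside the common plane $\Pi$ is essentially the paper's \Cref{le: back proj lines Case 2}, and your plan to expand the concurrency determinant in the mixed basis $\{e_4^1,e_4^2,e_4^5\}$ is essentially what the paper does by applying the fundamental action with $H_i=[\,e_i^j\ e_i^k\ e_i^5\,]$ in its final computation. The genuine gap is elsewhere: you never prove that the geometric conditions (plus the triplewise ones) actually imply \emph{compatibility}, and this is exactly the step that cannot be repeated ``almost verbatim'' from Case 1. In the sufficiency direction of \Cref{lem:case 1 geometry}, the three candidate fourth cameras $C_4^{ij}$, obtained by applying triplewise compatibility to the pairs $C_i,C_j$, are shown to coincide because they agree on $p_1,p_2,p_3,X$ and share the center $p_4$, and $\{p_1,p_2,p_3,p_4,X\}$ is a projective frame of $\PP^3$. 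In Case 2 the four centers are coplanar by hypothesis, so $\{p_1,p_2,p_3,p_4,p_5\}$ is never a projective frame, and agreement of the $C_4^{ij}$ on these five points does not determine them; the frame argument collapses precisely because of the geometry that defines this case.

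The paper closes this hole in \Cref{le: shared int Case 2} with an argument that has no Case 1 counterpart: it introduces an auxiliary point $p_6$ on the line spanned by $p_4,p_5$, checks that $\{p_1,p_2,p_3,p_5,p_6\}$ is a projective frame with $C_4^{ij}p_6\sim C_4^{ij}p_5$, and then compares $C_4^{12}$ with $C_4^{13}$ through the homography $H$ satisfying $C_1H=C_1$ and $C_4^{12}H=C_4^{13}$, which exists by \Cref{prop:properties_of_the_fundamental_matrix} since both pairs realize the same $F^{14}$. Agreement on the $p_i$ forces $Hp_i=\lambda_ip_i+\mu_ip_4$, and $C_1H=C_1$ then forces $\mu_i=0$, so $H$ is the identity up to scale and the three candidate cameras coincide. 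Without this (or some substitute for it) your proposal shows only that the two polynomial equations \Cref{eq:case 2 short} and \Cref{eq:case 2 long} encode the two geometric conditions; it does not establish the ``if'' direction of the theorem. Your concern about independence of the choices $e_i^5$ is legitimate but secondary: it resolves itself once each equation is identified with a geometric condition that makes no reference to $e_i^5$, which is how the paper handles it.
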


\begin{remark}
If we specify $e_1^5=F^{12}e_2^4,e_2^5=F^{23}e_3^4$ and $e_3^5=F^{31}e_1^4$ then we obtain exactly equation (49) from \textnormal{\cite{braatelund2023compatibility}} for $i=4$.
\end{remark}

We note that, in contrast to Case 1, in Case 2, the reconstructability of the $4$-th camera center is not guaranteed. To compare with \Cref{lem:case 1 geometry}, the three back-projected lines $C_1^{-1}e_1^4,C_2^{-1}e_2^4,C_3^{-1}e_3^4$ are coplanar, and there is a priori no unique intersection point.  

Before we prove \Cref{thm: Case 2 general form}, we need three lemmas. 

\begin{lemma}\label{le: back proj lines Case 2} Let $\Set{F^{ij}}$ be a six-tuple of fundamental matrices whose epipoles in each image are distinct and lie on a line. Suppose that $C_1,C_2,C_3$ is a reconstruction of the first three cameras, and define $e_i^5$ to be arbitrary points linearly independent of the other epipoles in the $i$-th image. The fourth camera center $p_4$ can be reconstructed, i.e. $C_1^{-1}e_1^4,C_2^{-1}e_2^4,C_3^{-1}e_3^4$ have a unique intersection point if and only if \Cref{eq:case 2 short} holds for $i=4$: 
\begin{align}\label{eq: back proj plane}
\textnormal{\textbf{e}}_{4325}\textnormal{\textbf{e}}_{4135}\textnormal{\textbf{e}}_{4215}+\textnormal{\textbf{e}}_{4315}\textnormal{\textbf{e}}_{4125}\textnormal{\textbf{e}}_{4235}=0.
\end{align}
\end{lemma}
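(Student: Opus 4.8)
The plan is to reduce the spatial intersection problem to a planar concurrency condition and then translate that condition into the epipolar numbers of \eqref{eq: back proj plane}. First I would fix the plane $\Pi$ spanned by the three reconstructed centers $p_1,p_2,p_3$, which are non-collinear since in Case 2 no three centers lie on a line. As $e_i^j=C_ip_j$, the back-projected line $C_i^{-1}e_i^j$ is the baseline $\overline{p_ip_j}$; since $\overline{p_ip_j}$ and $\overline{p_ip_k}$ span $\Pi$, the plane back-projected from the line $\overline{e_i^je_i^k}\subset\PP^2$ is exactly $\Pi$. The Case 2 hypothesis forces $e_i^4$ onto $\overline{e_i^je_i^k}$, whence $L_i:=C_i^{-1}e_i^4\subseteq\Pi$ for each $i$. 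Thus $L_1,L_2,L_3$ are coplanar, and—using that the epipoles are distinct, so that no two $L_i$ coincide (else some $e_i^4$ would equal $e_i^j$)—the three lines admit a unique common point if and only if they are concurrent in $\Pi\cong\PP^2$. This is precisely the reconstructibility of $p_4$.

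Next I would make the concurrency explicit via the projective Ceva theorem. Because $e_i^4$ is collinear with the other two epipoles of image $i$, I may write $e_1^4=\beta_1e_1^2+\gamma_1e_1^3$, $e_2^4=\alpha_2e_2^1+\gamma_2e_2^3$, and $e_3^4=\alpha_3e_3^1+\beta_3e_3^2$. Applying $C_i^{-1}$, the cevian $L_i$ meets the opposite side $\overline{p_jp_k}$ at the same combination of $p_j,p_k$, so $L_1,L_2,L_3$ are concurrent if and only if $\beta_1\gamma_2\alpha_3=\gamma_1\alpha_2\beta_3$.

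Then I would match this against \eqref{eq: back proj plane}. Substituting the three decompositions of $e_i^4$ and using the kernel relations $F^{ij}e_j^i=0$ and $(e_i^j)^\top F^{ij}=0$, every factor collapses to one coefficient times a triple-only epipolar number: for instance $\textbf{e}_{4325}=\alpha_3\textbf{e}_{1325}$, $\textbf{e}_{4135}=\beta_1\textbf{e}_{2135}$, $\textbf{e}_{4215}=\gamma_2\textbf{e}_{3215}$, and analogously $\textbf{e}_{4315}=\beta_3\textbf{e}_{2315}$, $\textbf{e}_{4125}=\gamma_1\textbf{e}_{3125}$, $\textbf{e}_{4235}=\alpha_2\textbf{e}_{1235}$. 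Hence \eqref{eq: back proj plane} equals $\beta_1\gamma_2\alpha_3\,A+\gamma_1\alpha_2\beta_3\,B$, where $A=\textbf{e}_{1325}\textbf{e}_{2135}\textbf{e}_{3215}$ and $B=\textbf{e}_{2315}\textbf{e}_{3125}\textbf{e}_{1235}$ involve only the triple $\{1,2,3\}$ and the auxiliary points $e_i^5$. Since each $e_i^5$ is linearly independent from the other epipoles in its image, every factor of $A$ and $B$ is nonzero. Granting the identity $A=-B$, the expression becomes $A(\beta_1\gamma_2\alpha_3-\gamma_1\alpha_2\beta_3)$, which vanishes exactly when the Ceva condition holds, closing the equivalence.

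The hard part is the scaling identity $A=-B$, which is where the convention $(F^{ij})^\top=F^{ji}$ genuinely enters. My plan is to prove it in any reconstruction of the triple through the representatives $F^{ij}_{\mathrm{rep}}=[e_i^j]_\times C_iC_j^\dagger$, for which one has the exact identity $F^{ij}_{\mathrm{rep}}e_j^k=e_i^j\times e_i^k$. Writing $F^{ij}=\kappa_{ij}F^{ij}_{\mathrm{rep}}$, the ratio $A/B$ equals a product $c_1c_2c_3$ of line-ratios (defined by $F^{12}e_2^3=c_1F^{13}e_3^2$, etc.), and after eliminating the $\kappa_{ij}$ via $(F^{ij})^\top=F^{ji}$ the statement $A=-B$ reduces to the cyclic relation $\nu_{12}\nu_{23}\nu_{31}=-1$, where $\nu_{ij}$ compares the rank-two skew forms $C_i^\top[e_i^j]_\times C_i$ and $C_j^\top[e_j^i]_\times C_j$. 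Evaluated on $p_k$ and a generic point, both of these are proportional to the single covector cutting out $\Pi$, so their magnitudes telescope around the cycle and only a sign, forced by the cyclic ordering of the cross products, survives; this gives $-1$. I expect this sign bookkeeping to be the only delicate step, everything before it being either geometry or a direct collapse.
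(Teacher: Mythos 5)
Your proposal is correct, and it takes a genuinely different route from the paper. The paper's proof is computational: it normalizes coordinates so that $e_i^j,e_i^k,e_i^5$ become the standard basis in each image, writes down explicit cameras $C_1,C_2,C_3$ (as in \Cref{le: C1C2C3}), parametrizes the three back-projected lines inside the hyperplane they span, and reads off concurrency as the vanishing of a $3\times 3$ determinant whose entries are then rewritten as the epipolar numbers of \Cref{eq: back proj plane}. You instead stay coordinate-free: the three back-projected lines are cevians of the triangle $p_1p_2p_3$ inside the plane $\Pi$, projective Ceva gives the concurrency condition $\beta_1\gamma_2\alpha_3=\gamma_1\alpha_2\beta_3$, the kernel relations collapse each factor of \Cref{eq: back proj plane} to one cevian coefficient times a triple-only epipolar number, and the lemma reduces to the scaling identity $A=-B$. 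I verified the delicate part: writing $F^{ij}=\kappa_{ij}[e_i^j]_\times C_iC_j^\dagger$ and using $[e_i^j]_\times C_iC_j^\dagger e_j^k=e_i^j\times e_i^k$, one gets $A=\kappa_{12}\kappa_{23}\kappa_{31}D$ and $B=-\kappa_{21}\kappa_{32}\kappa_{13}D$, where $D$ is the (nonzero) product of the determinants $\det\bigl[e_i^j\;e_i^k\;e_i^5\bigr]$; the convention $(F^{ij})^\top=F^{ji}$ then turns $A=-B$ into your cyclic relation on the ratios $\nu_{ij}$ comparing $C_i^\top[e_i^j]_\times C_i$ with $C_j^\top[e_j^i]_\times C_j$, and your telescoping argument (evaluate each skew form at the third center $p_k$; every result is a multiple of the covector cutting out $\Pi$; three sign flips from reversed cross products) does yield exactly $-1$, so the sign works out as you claim. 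Two small points: your assertion that every factor of $A$ and $B$ is nonzero needs the compatible-triple relation $F^{ij}e_j^k\sim e_i^j\times e_i^k$ (available because the reconstruction $C_1,C_2,C_3$ is assumed to exist), not merely the linear independence of $e_i^5$; and, on the plus side, your argument handles arbitrary admissible choices of $e_1^5,e_2^5,e_3^5$ uniformly, whereas the paper's proof takes $e_i^5=C_ip_5$ for a common world point $p_5$, so the independence of the equation from these choices (which \Cref{thm: Case 2 general form} explicitly asserts) is transparent in your version but only implicit in the paper's. What the paper's route buys in exchange is machinery: its explicit matrix computation is reused almost verbatim in the proof of \Cref{thm: Case 2 general form}, while your Ceva argument would require the separate epipolar-number bookkeeping there anyway.
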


\begin{proof} Let $C_5$ be any camera with center $p_5$ away from the span of $p_1,p_2,p_3$ and define $x_i:=C_ip_5$ for $i=1,2,3$. After projective transformation in each image, we can assume that $x_i=e_i^5$, and moreover that $e_i^j,e_i^k,e_i^5$ are equal to $(1,0,0),(0,1,0),(0,0,1)$, respectively. Therefore, $e_i^4=(a_i,b_i,0)$ for some $a_i,b_i\neq 0$. Then, as in \Cref{le: C1C2C3}, we may assume that $C_1,C_2,C_3$ are equal to 
  \begin{equation}
\begin{split}
&C_1=\begin{bmatrix}
    0 & \textnormal{\textbf{e}}_{3125} & 0 & 0 \\
    0 & 0 & -\textnormal{\textbf{e}}_{5123} & 0\\
    0 & 0 & 0 & \textnormal{\textbf{e}}_{3125}
\end{bmatrix},\quad\quad
C_2=\begin{bmatrix}
    -\textnormal{\textbf{e}}_{5231} & 0 & 0 & 0 \\
    0 & 0 & \textnormal{\textbf{e}}_{1235} &0\\
    0 & 0 & 0 &\textnormal{\textbf{e}}_{1235}
\end{bmatrix},\\
&C_3=\begin{bmatrix}
    \textnormal{\textbf{e}}_{5132} & 0 & 0 & 0 \\
    0 & -\textnormal{\textbf{e}}_{2135} & 0 & 0\\
    0 & 0 & 0 & \textnormal{\textbf{e}}_{5132}
\end{bmatrix}.
\end{split}
\end{equation}
The back-projected lines of $e_i^4$ are now easy to describe using pseudo-inverses of each camera. For instance,
\begin{align}
    \mu \begin{bmatrix}
        1 \\ 0\\ 0\\ 0
    \end{bmatrix} + \lambda  \begin{bmatrix}
        0& 0 & 0 \\ 1/\textnormal{\textbf{e}}_{3125} & 0 & 0\\ 0 & -1/\textnormal{\textbf{e}}_{5123}& 0\\ 0 & 0 & 1/\textnormal{\textbf{e}}_{3125}
    \end{bmatrix} e_1^4,
\end{align}
is a parametrization of $C_1^{-1}e_1^4$. All back-projected lines $C_i^{-1}e_i^4$, lie in the hyperplane of $\PP^3$ with last coordinate $0$, as a consequence of $e_i^4=(a_i,b_i,0)$. In this copy of $\PP^2$, the lines are defined by the vectors
\begin{align}
    \begin{bmatrix}
        1 \\ 0\\ 0
    \end{bmatrix}\times  \begin{bmatrix}
        0 \\ a_1/\textnormal{\textbf{e}}_{3125} \\ -b_1/\textnormal{\textbf{e}}_{5123}
    \end{bmatrix},\quad  \begin{bmatrix}
        0 \\ 1\\ 0
    \end{bmatrix}\times  \begin{bmatrix}
        -a_2/\textnormal{\textbf{e}}_{5231}\\ 0 \\ b_2/\textnormal{\textbf{e}}_{1235}
    \end{bmatrix}, \quad  \begin{bmatrix}
        0 \\ 0\\ 1
    \end{bmatrix}\times  \begin{bmatrix}
        a_3/\textnormal{\textbf{e}}_{5132} \\ -b_3/\textnormal{\textbf{e}}_{2135} \\ 0
    \end{bmatrix}.
\end{align}
Computing these vectors, and concatenating them into a $3\times 3$ matrix $M$, we have by $\textnormal{\textbf{e}}_{sijt}=\textnormal{\textbf{e}}_{tjis}$,
\begin{align}\label{eq: mat}
    M=\begin{bmatrix}
        0 & b_2\textnormal{\textbf{e}}_{5231} & b_3\textnormal{\textbf{e}}_{5132} \\
        b_1\textnormal{\textbf{e}}_{3125} & 0 & a_3\textnormal{\textbf{e}}_{2135}\\
        a_1\textnormal{\textbf{e}}_{5123} & a_2\textnormal{\textbf{e}}_{1235} & 0
    \end{bmatrix}=\begin{bmatrix}
        0 & b_2\textnormal{\textbf{e}}_{1325} & b_3\textnormal{\textbf{e}}_{2315} \\
        b_1\textnormal{\textbf{e}}_{3125} & 0 & a_3\textnormal{\textbf{e}}_{2135}\\
        a_1\textnormal{\textbf{e}}_{3215} & a_2\textnormal{\textbf{e}}_{1235} & 0
    \end{bmatrix} .
\end{align}
The determinant of this matrix is 0 if and only if there is a point meeting the three back-projected lines. Next, we use $e_i^4=a_ie_i^j+b_ie_i^k$ for distinct $i,j,k$ with $j<k$ to deduce that
\begin{align}
    (e_i^k)^\top F^{ij}e_j^5=\frac{1}{b_i}(e_i^4)^\top F^{ij}e_j^5,\quad    (e_i^j)^\top F^{ik}e_k^5=\frac{1}{a_i}(e_i^4)^\top F^{ik}e_k^5.
\end{align}
Then 
\begin{align}
    M=\begin{bmatrix}
        0 & \frac{b_2}{a_3}\textnormal{\textbf{e}}_{4325} & \textnormal{\textbf{e}}_{4315} \\
        \textnormal{\textbf{e}}_{4125} & 0 & \frac{a_3}{a_1}\textnormal{\textbf{e}}_{4135}\\
        \frac{a_1}{b_2}\textnormal{\textbf{e}}_{4215} & \textnormal{\textbf{e}}_{4235} & 0
    \end{bmatrix} .
\end{align}
We can now see that $\det M=0$ is \Cref{eq: back proj plane}. 

The intersection point must finally be unique, because the three back-projected lines cannot meet in a line; the center are not collinear. 
\end{proof}

We now show that condition \Cref{eq:case 2 short} together with the epipolar line condition are necessary and sufficient for compatibility.
\begin{lemma}\label{le: shared int Case 2}  Let $\Set{F^{ij}}$ be a six-tuple of fundamental matrices whose epipoles in each image are distinct and lie on a line. Then $\Set{F^{ij}}$ is compatible if and only if \Cref{eq:case 2 short} holds for all $i=1,2,3,4$ and the epipolar line condition holds. That is, after reconstructing $C_1,C_2,C_3$ and the camera center $p_4$, for any point $p_5$ away from the plane spanned by $p_1,p_2,p_3,p_4$, the three epipolar lines $F^{41}C_1p_5, F^{42}C_2p_5, F^{43}C_3p_5$ have a unique shared intersection. 
\end{lemma}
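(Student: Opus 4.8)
The plan is to fix a reconstruction $C_1,C_2,C_3$ of the first three cameras---which exists because the triplet $\Set{F^{12},F^{13},F^{23}}$ is non-collinear and satisfies its triplewise condition---and then to compare three candidate reconstructions of the fourth camera, one for each pair in $\{1,2,3\}$. I write $p_1,p_2,p_3$ for the reconstructed centers and $\Pi$ for the plane they span; since the four centers are coplanar, $\Pi$ contains $p_4$ as well, and (once a fourth camera exists) $\Pi$ is exactly the preimage of the epipole line $\ell_4$ through $e_4^1,e_4^2,e_4^3$.

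For the forward implication, suppose $\Set{F^{ij}}$ is compatible with solution $C_1,\dots,C_4$. Each back-projected line $C_j^{-1}e_j^i$ with $j\neq i$ equals the center line $\overline{p_ip_j}$ (because $e_j^i=C_jp_i$), so the three such lines meet exactly at $p_i$; applying \Cref{le: back proj lines Case 2} with each index playing the role of $4$---valid since no three centers are collinear---gives \Cref{eq:case 2 short} for every $i$. For the epipolar line condition, the preimage of $\ell_4$ under $C_4$ is $\Pi$, so for $q\notin\Pi$ the point $C_4q$ lies off $\ell_4$; the three lines $F^{4i}C_iq=\overline{e_4^i\,C_4q}$ are then distinct and meet precisely at $C_4q$, which is the required unique intersection.

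For the converse, the key objects are the cameras $C_4^{ij}$, one for each pair $i,j\in\{1,2,3\}$, obtained as the unique third camera of the non-collinear triplet $\Set{F^{ij},F^{i4},F^{j4}}$ relative to the fixed $C_i,C_j$ (\Cref{thm: non colin}, \Cref{lem:triple_has_unique_solution}); these exist by the triplewise conditions, and by \Cref{eq:case 2 short} for $i=4$ together with \Cref{le: back proj lines Case 2} they share the common center $p_4$. For any world point $q$, the defining relations of $C_4^{ij}$ force $C_4^{ij}q$ to lie on both epipolar lines $F^{4i}C_iq$ and $F^{4j}C_jq$. Now the epipolar line condition says exactly that the cubic form $q\mapsto\det[\,F^{41}C_1q\mid F^{42}C_2q\mid F^{43}C_3q\,]$ vanishes at a generic off-plane point, hence vanishes identically; and for generic $q$ these three lines are pairwise distinct, since they cannot be identically proportional (evaluating $F^{41}C_1q$ and $F^{42}C_2q$ at $q=p_2$ would force $e_1^2\sim e_1^4$, contradicting the distinctness of the epipoles in image $1$). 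Thus for generic $q$ the three lines share a single point $x_4(q)$, and each $C_4^{ij}q$, lying on two of them, equals $x_4(q)$. Hence $C_4^{12}q=C_4^{13}q=C_4^{23}q$ on a Zariski-dense set, so $C_4^{12}\sim C_4^{13}\sim C_4^{23}=:C_4$. This single camera realizes $F^{i4}$ for all $i=1,2,3$, and together with $C_1,C_2,C_3$ it realizes all six fundamental matrices, so $\Set{F^{ij}}$ is compatible.

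The main obstacle---and the essential departure from Case 1 (\Cref{lem:case 1 geometry})---is that $p_1,p_2,p_3,p_4$ are now coplanar, so they cannot be completed to a projective frame of $\PP^3$, and the ``third epipole'' step $C_4^{ij}p_k\sim e_4^k$ fails precisely because $e_4^1,e_4^2,e_4^3$ are collinear. I expect the resolution to be the one above: rather than pinning $C_4$ on finitely many frame points, one exploits that the epipolar line condition is a statement about all off-plane points and thereby forces the three candidate cameras to agree on a dense set, upgrading pointwise agreement to equality of cameras. The delicate point to get right is the genericity---ensuring the three epipolar lines are truly distinct for generic $q$, so that $C_4^{ij}q$ is pinned to their common point rather than to a whole line---which is exactly where the hypothesis that the epipoles in each image are distinct is used.
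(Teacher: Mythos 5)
Your forward direction coincides with the paper's. Your converse is a genuinely different argument, and the comparison is instructive: where the paper pins down the candidate cameras $C_4^{ij}$ on finitely many points, you let them agree on a Zariski-dense set. Concretely, the paper recovers $C_4^{ij}p_k\sim e_4^k$ for the missing index $k\in\{1,2,3\}\setminus\{i,j\}$ by re-applying \Cref{le: back proj lines Case 2} with $k$ in the role of $4$ (this is where \Cref{eq:case 2 short} for $i=1,2,3$ enters; your converse never touches those three equations), obtains $C_4^{ij}p_5\sim e_4^5$ from concurrency at a single point $p_5$, and then repairs exactly the obstruction you identified---$\{p_1,\ldots,p_5\}$ is not a frame---by adjoining an auxiliary point $p_6\in\overline{p_4p_5}\setminus\{p_4,p_5\}$, so that $\{p_1,p_2,p_3,p_5,p_6\}$ is a projective frame on which the $C_4^{ij}$ agree (agreement at $p_6$ is automatic, since $p_6$ lies on a line through the common center); a short $\mathrm{PGL}_4$-ambiguity argument ($C_1H=C_1$ and $C_4^{12}H=C_4^{13}$ force $H\sim I$) then yields $C_4^{12}\sim C_4^{13}\sim C_4^{23}$.

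The substantive gap is the quantifier you place on the epipolar line condition. Your dense-agreement argument needs concurrency of $F^{41}C_1q, F^{42}C_2q, F^{43}C_3q$ at \emph{all} (generic) off-plane points $q$: the identical vanishing of the determinant, and the relation $C_4^{ij}q\sim x_4(q)$, are extracted point by point from the hypothesis. The paper's proof needs concurrency at \emph{one} off-plane point, and this stronger single-point statement is what the paper actually uses downstream: in the proof of \Cref{thm: Case 2 general form}, the lemma is invoked at the single point $p_5=(0,0,0,1)$, whose concurrency condition is the one scalar equation \Cref{eq:case 2 long}. With your version of the lemma, the implication ``\Cref{eq:case 2 long} holds $\Rightarrow$ compatible'' does not follow, since one scalar equation certifies concurrency at one point only. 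So your proof establishes the equivalence only under the universally quantified reading of the condition, which is too weak for the role the lemma plays in the paper. To close the gap you would either run the paper's frame-plus-$p_6$ argument (all of your machinery---the cameras $C_4^{ij}$, their common center via \Cref{le: back proj lines Case 2}, the distinctness of the epipolar lines, where your evaluation at $q=p_2$ is the right trick---remains usable for it), or show separately that concurrency at one off-plane point together with \Cref{eq:case 2 short} propagates to all off-plane points, which is essentially the content of the lemma itself. The remaining steps of your write-up (the forward direction, the common center, and dense projective agreement implying equality of cameras) are correct.
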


Here we use the notation $p_5$ instead of $X$ as in \Cref{lem:case 1 geometry}, because we denote the images of $p_5$ with respect to the cameras by $e_i^5$, notation used in \Cref{thm: Case 2 general form}.

\begin{proof} It follows by \Cref{le: back proj lines Case 2} that the first four conditions are necessary for compatibility. Moreover, assuming compatibility holds and there exists a reconstruction $C_1,C_2,C_3,C_4$, then $C_4p_5$ must be the unique point in the intersection of the epipolar lines.    


For the other direction, suppose the lines $F^{41}C_1p_5, F^{42}C_2p_5, F^{43}C_3p_5$ intersect uniquely in some point $e_4^5$. Let $i,j\in \{1,2,3\}$ be distinct. Given the two cameras $C_i,C_j$, triplewise compatibility implies that there exist unique cameras $C_4^{ij}$, with center $p_4$, such that their fundamental matrices are $F^{ij},F^{i4},F^{j4}$. We are done if we can prove that $C_4^{12},C_4^{13},C_4^{23}$ are all equal, because then by construction $F^{ij}$, for $i,j\in \{1,2,3,4\}$, are the fundamental matrices to $C_1,C_2,C_3$ and (up to scaling) $C_4:=C_4^{12}=C_4^{13}=C_4^{23}$. The cameras $C_4^{ij}$ satisfy $C_4^{ij}p_4=0$ and $C_4^{ij}p_k\sim e_4^k$ for $k=i,j$ by construction. By \Cref{le: back proj lines Case 2}, the back-projected lines $C_i^{-1}e_i^k,$ $C_j^{-1}e_j^k,$ $(C_4^{ij})^{-1}e_4^k$ meet in a unique point for $k\in \{1,2,3\}\setminus\{i,j\}$. This unique point must be $p_3$, because this is the unique intersection point of $C_1^{-1}e_1^3,C_2^{-1}e_2^3$ as in \Cref{thm: non colin}. Then $C_4^{ij}p_k\sim e_4^k$, for $k\in \{1,2,3\}\setminus\{i,j\}$, and by the epipolar line conditions, we have $C_4^{ij}p_5\sim e_4^5$. Define $p_6$ to be any point in the line spanned by $p_4,p_5$, away from both $p_4,p_5$. In contrast to \Cref{lem:case 1 geometry}, $\{p_1,\ldots,p_5\}$ is not a projective frame, although one can then check that $\{p_1,p_2,p_3,p_5,p_6\}$ is. Observe that $C_4^{ij}p_6\sim C_4^{ij}p_5$. 

We next prove that $C_4^{12},C_4^{13}$ are the same up to scaling. Because $C_1,C_4^{12}$ and $C_1,C_4^{13}$ have the same fundamental matrix $F^{14}$, there must exist an invertible projective transformation $H$ such that $C_1H=C_1$ and $C_4^{12}H=C_4^{13}$. Since $C_4^{12},C_4^{13}$ act the same on each $p_i$, we must have  $Hp_i=\lambda_i p_i+\mu_ip_4$, for $i=1,\ldots,6$. However, the fact that $C_1H=C_1$ and $p_1\neq p_4$, implies that $\mu_i=0$ for each $i=1,2,3,5,6$. Then $H$ must be the identity matrix (up to scaling) and thus $C_4^{12}\sim C_4^{13}$. Repeating the argument, all three $C_4^{ij}$ are the same and we are done.
\end{proof}

\begin{lemma}\label{le: G forms c2} Let $\Set{F^{ij}}$ be a six-tuple of fundamental matrices whose epipoles in each image are distinct and lie on a line. If triplewise conditions hold, then up to fundamental action, the fundamental matrices equal 
\begin{align}\label{eq: Gform c2}
\begin{aligned}
    &G^{12}=\begin{bmatrix} 0&0&0\\0&0&x_{12}\\0&y_{12}&z_{12} \end{bmatrix},
    &&G^{13}=\begin{bmatrix} 0&0&x_{13}\\0&0&0\\0&y_{13}&z_{13} \end{bmatrix}, &&G^{14}=\begin{bmatrix} 0&0&x_{14}\\0&0&-x_{14}\\0&y_{14}&z_{14} \end{bmatrix},\\
    &G^{23}=\begin{bmatrix} 0&0&x_{23}\\0&0&0\\y_{23}&0&z_{23} \end{bmatrix}, &&G^{24}=\begin{bmatrix} 0&0&x_{24}\\0&0&-x_{24}\\y_{24}&0&z_{24} \end{bmatrix},
    &&G^{34}=\begin{bmatrix} 0&0&x_{34}\\0&0&-x_{34}\\y_{34}&-y_{34}&z_{34} \end{bmatrix}.
\end{aligned}
\end{align}
Further, if \Cref{eq:case 2 short} holds for $i=4$, then we can reconstruct the first three cameras as
    \begin{align}\label{eq: C rec}
\begin{aligned}
    &C_1=\begin{bmatrix} 0&1&0&0\\0&0&1&0\\0&0&0&x_{12}x_{13}y_{23}\end{bmatrix},  & &C_2=\begin{bmatrix} 1&0&0&0\\0&0&1&z_{12}x_{13}y_{23}\\0&0&0&-y_{12}x_{13}y_{23} \end{bmatrix},\\
    &C_3=\begin{bmatrix} 1&0&0&z_{23}x_{12}y_{13}\\0&1&0&z_{13}x_{12}y_{23}\\0&0&0&-x_{12}y_{13}y_{23} \end{bmatrix}.
\end{aligned}
\end{align}
\end{lemma}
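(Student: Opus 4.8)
The plan is to prove the two assertions separately, both by passing to explicit coordinates through the fundamental action of \Cref{prop:fundamental_action_preserves_compibility}. The key geometric input is that in each image the three epipoles are distinct and collinear, so in image $i$ there is a projective transformation carrying the triple $(e_i^j,e_i^k,e_i^l)$, with $j<k<l$ the indices of the other three cameras, to the standard collinear triple $(1,0,0),(0,1,0),(1,1,0)$. Since the normalization in image $i$ depends only on $H_i$, I can realize all four of these normalizations simultaneously; replacing $F^{ij}$ by $G^{ij}:=H_i^\top F^{ij}H_j$ does not affect compatibility and fixes the left kernel of each $G^{ij}$ at the normalized $e_i^j$ and its right kernel at the normalized $e_j^i$.

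For the first assertion I would read off each $G^{ij}$ from its two kernels together with a single triplewise equation. The prescribed epipole positions force the vanishing of the corresponding column and row of $G^{ij}$ (and, whenever an epipole equals $(1,1,0)$, force the two relevant columns or rows to sum to zero). This reproduces every zero in \Cref{eq: Gform c2} save one entry per matrix. That last entry is removed by the Non-Collinear triplewise condition of \Cref{thm: non colin}: for instance $(e_1^3)^\top G^{12}e_2^3=0$ evaluates, at the normalized epipoles, to the $(2,2)$-entry of $G^{12}$, and the analogous equations dispose of the remaining five matrices. The surviving entries are then named $x_{ij},y_{ij},z_{ij}$.

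For the second assertion I would construct $C_1,C_2,C_3$ by fixing a frame of $\PP^3$. Placing the centers at $p_1=(1,0,0,0),\,p_2=(0,1,0,0),\,p_3=(0,0,1,0)$ forces the $i$-th column of $C_i$ to vanish, and the image constraints $C_ip_j\sim e_i^j$ determine the remaining first three columns up to scale. The hypothesis that \Cref{eq:case 2 short} holds for $i=4$ guarantees, via \Cref{le: back proj lines Case 2}, that the back-projected lines of $e_1^4,e_2^4,e_3^4$ meet in a single point; since no three centers are collinear this point has all coordinates nonzero, so the residual gauge fixing $p_1,p_2,p_3$ lets me normalize it to $p_4=(1,1,1,0)$, and imposing $C_ip_4\sim e_i^4$ fixes the relative scales of the first three columns. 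Finally the fourth columns are determined by requiring $\psi(C_i,C_j)\sim G^{ij}$, equivalently that $C_i^\top G^{ij}C_j$ be skew-symmetric by \Cref{prop:properties_of_the_fundamental_matrix}; carrying out this computation reproduces \Cref{eq: C rec}.

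The main obstacle is the last step. The skew-symmetry checks for the pairs $(1,2)$ and $(1,3)$ hold identically in the parameters, but the check for the pair $(2,3)$ is nontrivial: after substituting the normalized epipoles and the entries of $G^{23}$, the off-diagonal consistency of $C_2^\top G^{23}C_3$ collapses to $x_{13}y_{12}y_{23}+x_{12}x_{23}y_{13}=0$, which is exactly \Cref{eq:case 2 short} for $i=4$ written in these coordinates (as one confirms by evaluating the relevant epipolar numbers at $e_i^5=(0,0,1)$). Thus the hypothesis is precisely the single condition making \Cref{eq: C rec} a genuine reconstruction of the triple, and the principal care in writing the proof is to organize the verification so that this equation emerges transparently rather than as an opaque algebraic coincidence.
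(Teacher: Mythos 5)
Your proposal is correct, and its first half is exactly the paper's argument: the paper likewise applies the fundamental action with $H_i=\begin{bmatrix} e_i^j & e_i^k & x_i\end{bmatrix}$ (after scaling so that $e_i^l=e_i^j+e_i^k$, i.e.\ normalizing the epipoles in each image to $(1,0,0),(0,1,0),(1,1,0)$) and lets the kernel constraints together with one non-collinear triplewise identity per pair force the shape \Cref{eq: Gform c2}; the paper merely states this more briefly, attributing the observation to prior work. Where you genuinely depart is the second half. The paper does not derive \Cref{eq: C rec}: it writes the three cameras down and verifies, for $X=(a,b,c,d)$, that $(C_iX)^\top G^{ij}C_jX=0$, with the pairs $(1,2)$ and $(1,3)$ vanishing identically and the pair $(2,3)$ producing $-ad\,y_{23}\left(y_{12}x_{13}y_{23}+x_{12}y_{13}x_{23}\right)$, which vanishes precisely by \Cref{eq:case 2 short} for $i=4$ --- the same equation your computation isolates, and your identification of it as that equation evaluated at $e_i^5=(0,0,1)$ checks out. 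Your constructive route --- fixing the world frame with $p_1,p_2,p_3$ at coordinate points and $p_4=(1,1,1,0)$ (legitimate via \Cref{le: back proj lines Case 2}, triplewise compatibility of the first triplet, and the residual diagonal gauge), then solving the skew-symmetry conditions of \Cref{prop:properties_of_the_fundamental_matrix} for the fourth columns --- buys something the paper's verification does not: it explains where \Cref{eq: C rec} comes from and shows it is the unique reconstruction up to the stabilizer of the chosen frame, rather than presenting it as an ansatz. The price is bookkeeping you should make explicit in a write-up: the skew-symmetry system always admits the degenerate solution in which the third entries of all three fourth columns vanish, and that solution consists of rank-$2$ matrices rather than cameras; only after imposing full rank (equivalently, fixing a nonzero scale such as $v_{13}=x_{12}x_{13}y_{23}$) do the off-diagonal checks for $(1,2)$ and $(1,3)$ determine the remaining third entries, whereupon the $(2,3)$-check collapses, as you say, to $x_{12}x_{23}y_{13}+x_{13}y_{12}y_{23}=0$.
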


\begin{proof} Fix a scaling of the epipoles such that $e_i^l=e_i^j+e_i^k$ for $j<k<l$. The first part of the statement was observed in \cite{braatelund2023compatibility}. The idea is to apply the fundamental action given by $H_i=\begin{bmatrix} e_i^j & e_i^k & x_i    
\end{bmatrix}$ for distinct $i,j,k,l\in \{1,2,3,4\}$ with $j<k<l$ and some $x_i$ making $H_i$ invertible. Then by the triplewise conditions, $G^{ij}:=H_i^\top F^{ij}H_j$ must be on the form \Cref{eq: Gform c2}.

To see that \Cref{eq: C rec} is a valid reconstruction, let $X=(a,b,c,d)$ and see that
\begin{equation}\begin{split}
(C_1X)^\top G^{12}C_2X&=(b,c,dx_{12}x_{13}y_{23})^\top G^{12}(a,c+dz_{12}x_{13}y_{23},-dy_{12}x_{13}y_{23})\\
&=(b,c,dx_{12}x_{13}y_{23})\cdot(0,-dx_{12}y_{12}x_{13}y_{23},cy_{12})\\
&=0.
\end{split}\end{equation}
Similarly 
\begin{equation}\begin{split}
(C_1X)^\top G^{13}C_3X&=(b,c,dx_{12}x_{13}y_{23})G^{13}(a+dz_{23}x_{12}y_{13},b+dz_{13}x_{12}y_{23},-dx_{12}y_{13}y_{23})\\
&=(b,c,dx_{12}x_{13}y_{23})\cdot(-dx_{13}x_{12}y_{13}y_{23},0,by_{13})\\
&=0,
\end{split}\end{equation}
and finally 
\begin{equation}\begin{split}
(C_2X)^\top G^{23}C_3X&=(a,c+dz_{12}x_{13}y_{23},-dy_{12}x_{13}y_{23})G^{23}(a+dz_{23}x_{12}y_{13},b+dz_{13}x_{12}y_{23},-dx_{12}y_{13}y_{23})\\
&=(a,c+dz_{12}x_{13}y_{23},-dy_{12}x_{13}y_{23})\cdot(-dx_{23}x_{12}y_{13}y_{23},0,ay_{23})\\
&=-ady_{23}(y_{12}x_{13}y_{23}+x_{12}y_{13}x_{23})\\
&=0,
\end{split}\end{equation}
by \Cref{eq:case 2 short} for $i=4$, which finishes the proof. 
 \end{proof}


\begin{proof}[Proof of \Cref{thm: Case 2 general form}] We use \Cref{le: G forms c2} to get simplified fundamental matrices $G^{ij}$, having applied the fundamental action $H_i=\begin{bmatrix} e_i^j & e_i^k & e_i^5
\end{bmatrix}$, for $i=1,2,3,4$ with $j<k$ being the smallest indices distinct from $i$. Recall that $\Set{F^{ij}}$ is compatible if and only if $\Set{G^{ij}}$ is. By assumption, $H_i$ are invertible. Let $h_i^j:=H_i^{-1}e_i^j$ be the epipoles of $G^{ij}$, and let $h_i^5=(0,0,1)$. Write $\textnormal{\textbf{h}}_{sijt}= (h_i^s)^\top G^{ij}h_j^t$ for the epipolar numbers of $G$. We can now deduce that 
\begin{equation}\begin{split}
x_{ij}=(h_i^k)^\top G^{ij}h_j^5=\textnormal{\textbf{h}}_{kij5},\\
y_{ij}=(h_i^5)^\top G^{ij}h_j^k=\textnormal{\textbf{h}}_{5ijk},\\
z_{ij}=(h_i^5)^\top G^{ij}h_j^5=\textnormal{\textbf{h}}_{5ij5},
\end{split}\end{equation}
where $k$ is chosen such that $k=\min\{1,2,3,4~:~k\neq i,j\}$. 

As in \Cref{le: back proj lines Case 2} Taking the point $p_5=(0,0,0,1)$, consider the three epipolar lines $G^{41}C_1p_5,$ $G^{42}C_2p_5,$ $G^{43}C_3p_5$. 
The matrix $\begin{bmatrix} G^{41}C_1p_5 &G^{42}C_2p_5 &G^{43}C_3p_5 \end{bmatrix}$ equals 
\begin{align}\label{eq: simplified}
\begin{bmatrix}
        0&-y_{12}x_{13}y_{23}y_{24}&-x_{12}y_{13}y_{23}y_{34}\\
        x_{12}x_{13}y_{23}y_{14}&0&x_{12}y_{13}y_{23}y_{34}\\
        x_{12}x_{13}y_{23}z_{14}& -z_{12}x_{13}y_{23}x_{24}-y_{12}x_{13}y_{23}z_{24}& z_{23}x_{12}y_{13}x_{34}-z_{13}x_{12}y_{23}x_{34}-x_{12}y_{13}y_{23}z_{34}
    \end{bmatrix}.
\end{align}
Since all $x_{ij},y_{ij}$ are non-zero, by scaling, we can simplify this to
\begin{align}
\begin{bmatrix}
        0 & y_{12}y_{24} & -y_{13}y_{23}y_{34}\\
        y_{14} & 0 & y_{13}y_{23}y_{34}\\
        z_{14} & z_{12}x_{24}+y_{12}z_{24} & z_{23}y_{13}x_{34}-z_{13}y_{23}x_{34}-y_{13}y_{23}z_{34}
    \end{bmatrix}.
\end{align}
By \Cref{le: shared int Case 2}, we want to understand the condition that the three epipolarr lines meet in a unique point. This happens precisely when the determinant of this matrix is zero. Indeed, by the fact that $y_{ij}$ are non-zero, the matrix of \Cref{eq: simplified} cannot be rank 1. This yields the equation
\begin{equation}\begin{split}
&-x_{24}y_{13}y_{14}y_{23}y_{34}z_{12}+x_{34}y_{12}y_{14}y_{23}y_{24}z_{13}+y_{12}y_{13}y_{23}y_{24}y_{34}z_{14}\\
&-x_{34}y_{12}y_{13}y_{14}y_{24}z_{23}-y_{12}y_{13}y_{14}y_{23}y_{34}z_{24}+y_{12}y_{13}y_{14}y_{23}y_{24}z_{34}=0.
\end{split}\end{equation}
Writing this out in terms of the epipolar numbers, we get
\begin{equation}\begin{split}
&-\textnormal{\textbf{h}}_{1245}\textnormal{\textbf{h}}_{2315}\textnormal{\textbf{h}}_{2415}\textnormal{\textbf{h}}_{1325}\textnormal{\textbf{h}}_{1435}\textnormal{\textbf{h}}_{5125}+\textnormal{\textbf{h}}_{1345}\textnormal{\textbf{h}}_{3215}\textnormal{\textbf{h}}_{2415}\textnormal{\textbf{h}}_{1325}\textnormal{\textbf{h}}_{1425}\textnormal{\textbf{h}}_{5135}+\textnormal{\textbf{h}}_{3215}\textnormal{\textbf{h}}_{2315}\textnormal{\textbf{h}}_{1325}\textnormal{\textbf{h}}_{1425}\textnormal{\textbf{h}}_{1435}\textnormal{\textbf{h}}_{5145}\\&-\textnormal{\textbf{h}}_{1345}\textnormal{\textbf{h}}_{3215}\textnormal{\textbf{h}}_{2315}\textnormal{\textbf{h}}_{2415}\textnormal{\textbf{h}}_{1425}\textnormal{\textbf{h}}_{5235}-\textnormal{\textbf{h}}_{3215}\textnormal{\textbf{h}}_{2315}\textnormal{\textbf{h}}_{2415}\textnormal{\textbf{h}}_{1325}\textnormal{\textbf{h}}_{1435}\textnormal{\textbf{h}}_{5245}+\textnormal{\textbf{h}}_{3215}\textnormal{\textbf{h}}_{2315}\textnormal{\textbf{h}}_{2415}\textnormal{\textbf{h}}_{1325}\textnormal{\textbf{h}}_{1425}\textnormal{\textbf{h}}_{5345}=0.
\end{split}\end{equation}
To ensure homogeneity in the $h_i^j$, we recall that our fixed scalings $e_i^l=e_i^j+e_i^k$ for $j<k<l$ among $\{1,2,3,4\}$ imply the identities $\textnormal{\textbf{h}}_{3245}=-\textnormal{\textbf{h}}_{1245}, \textnormal{\textbf{h}}_{1345}=-\textnormal{\textbf{h}}_{2345},$ and $\textnormal{\textbf{h}}_{1435}=-\textnormal{\textbf{h}}_{2435}$. Performing these substitutions in the first two terms, we have
\begin{equation}\begin{split}
&\textnormal{\textbf{h}}_{3245}\textnormal{\textbf{h}}_{2315}\textnormal{\textbf{h}}_{2415}\textnormal{\textbf{h}}_{1325}\textnormal{\textbf{h}}_{1435}\textnormal{\textbf{h}}_{5125}-\textnormal{\textbf{h}}_{2345}\textnormal{\textbf{h}}_{3215}\textnormal{\textbf{h}}_{2415}\textnormal{\textbf{h}}_{1325}\textnormal{\textbf{h}}_{1425}\textnormal{\textbf{h}}_{5135}-\textnormal{\textbf{h}}_{3215}\textnormal{\textbf{h}}_{2315}\textnormal{\textbf{h}}_{1325}\textnormal{\textbf{h}}_{1425}\textnormal{\textbf{h}}_{2435}\textnormal{\textbf{h}}_{5145}\\
-&\textnormal{\textbf{h}}_{1345}\textnormal{\textbf{h}}_{3215}\textnormal{\textbf{h}}_{2315}\textnormal{\textbf{h}}_{2415}\textnormal{\textbf{h}}_{1425}\textnormal{\textbf{h}}_{5235}-\textnormal{\textbf{h}}_{3215}\textnormal{\textbf{h}}_{2315}\textnormal{\textbf{h}}_{2415}\textnormal{\textbf{h}}_{1325}\textnormal{\textbf{h}}_{1435}\textnormal{\textbf{h}}_{5245}+\textnormal{\textbf{h}}_{3215}\textnormal{\textbf{h}}_{2315}\textnormal{\textbf{h}}_{2415}\textnormal{\textbf{h}}_{1325}\textnormal{\textbf{h}}_{1425}\textnormal{\textbf{h}}_{5345}=0.
\end{split}\end{equation}
Finally, undoing the fundamental action by using that $\textnormal{\textbf{h}}_{sijt}=\textnormal{\textbf{e}}_{sijt}$, we obtain the equation
\begin{equation}\begin{split}
&\textnormal{\textbf{e}}_{3245}\textnormal{\textbf{e}}_{2315}\textnormal{\textbf{e}}_{2415}\textnormal{\textbf{e}}_{1325}\textnormal{\textbf{e}}_{1435}\textnormal{\textbf{e}}_{5125}-\textnormal{\textbf{e}}_{2345}\textnormal{\textbf{e}}_{3215}\textnormal{\textbf{e}}_{2415}\textnormal{\textbf{e}}_{1325}\textnormal{\textbf{e}}_{1425}\textnormal{\textbf{e}}_{5135}-\textnormal{\textbf{e}}_{3215}\textnormal{\textbf{e}}_{2315}\textnormal{\textbf{e}}_{1325}\textnormal{\textbf{e}}_{1425}\textnormal{\textbf{e}}_{2435}\textnormal{\textbf{e}}_{5145}\\
-&\textnormal{\textbf{e}}_{1345}\textnormal{\textbf{e}}_{3215}\textnormal{\textbf{e}}_{2315}\textnormal{\textbf{e}}_{2415}\textnormal{\textbf{e}}_{1425}\textnormal{\textbf{e}}_{5235}-\textnormal{\textbf{e}}_{3215}\textnormal{\textbf{e}}_{2315}\textnormal{\textbf{e}}_{2415}\textnormal{\textbf{e}}_{1325}\textnormal{\textbf{e}}_{1435}\textnormal{\textbf{e}}_{5245}+\textnormal{\textbf{e}}_{3215}\textnormal{\textbf{e}}_{2315}\textnormal{\textbf{e}}_{2415}\textnormal{\textbf{e}}_{1325}\textnormal{\textbf{e}}_{1425}\textnormal{\textbf{e}}_{5345}=0,
\end{split}\end{equation}
which finishes the proof.
\end{proof}
\begin{remark} If $e_i^5$ in \Cref{thm: Case 2 general form} are chosen such that $(e_i^5)^\top F^{ij}e_j^5=0$ for $i,j$ among $1,2,3$, then \Cref{eq:case 2 long} takes the simpler form \begin{equation}\label{eq: simpler case 2}
    \textnormal{\textbf{e}}_{1425}\textnormal{\textbf{e}}_{2435}\textnormal{\textbf{e}}_{5415}+\textnormal{\textbf{e}}_{1435}\textnormal{\textbf{e}}_{2415}\textnormal{\textbf{e}}_{5425}-\textnormal{\textbf{e}}_{1425}\textnormal{\textbf{e}}_{2415}\textnormal{\textbf{e}}_{5435}=0.
\end{equation}
\end{remark}

Under the conditions of this remark, we next give a simpler proof of \Cref{thm: Case 2 general form}. After reconstructing the first three cameras $C_1,C_2,C_3$, choose a generic camera $C_5$ and let $e_i^5$ be the epipoles $C_ip_5$. Then we may assume $C_1,C_2,C_3$ to equal \Cref{eq: C1C2C3}, and we consider by \Cref{le: shared int Case 2} the condition
\begin{align}\label{eq: shared int pf}
    \det \begin{bmatrix}
        F^{41}C_1p_5 & F^{42}C_2p_5 & F^{43}C_3p_5
    \end{bmatrix} =0,
\end{align}
with respect to the center $p_5=(0,0,0,1)$. Writing it out, we have
\begin{align}
    \det \begin{bmatrix}
       \textnormal{\textbf{e}}_{3125} \cdot F^{41}e_1^5  &
       \textnormal{\textbf{e}}_{1235}\cdot   F^{42}e_2^5   &\textnormal{\textbf{e}}_{5132}\cdot  F^{43}e_3^5   
    \end{bmatrix} =0.
\end{align}
We may simplify this equation by scaling in each column, giving us $\det  \begin{bmatrix}
        F^{41}e_1^5  &
         F^{42}e_2^5 & F^{43}e_3^5 
    \end{bmatrix} =0.$ Recall that we have performed coordinate change in each image such that $e_i^j=(1,0,0),e_i^k=(0,1,0)$ and $e_i^5=(0,0,1)$ for $j<k$. With respect to these epipoles, we can have
    \begin{align}
        F^{41}e_1^5=(0,b_1,c_1)^\top, \quad F^{42}e_2^5=(a_2,b_2,0)^\top,\quad F^{43}e_3^5=(a_3,b_3,c_3)^\top,
    \end{align}
    for some constants $e_i,b_i,c_i$. Further, $(e_4^j)^\top F^{4i}e_i^5=a_i,(e_4^k)^\top F^{4i}e_i^5=b_i$ and $(e_4^5)^\top F^{4i}e_i^5=c_i$ for $j<k$ and $i,j,k\in \{1,2,3\}.$ Now, \Cref{eq: shared int pf} becomes
\begin{align}
    \det \begin{bmatrix}
        0 &  \textnormal{\textbf{e}}_{1425} & \textnormal{\textbf{e}}_{1435} \\
        \textnormal{\textbf{e}}_{2415} & 0 & \textnormal{\textbf{e}}_{2435}\\
        \textnormal{\textbf{e}}_{5415} & \textnormal{\textbf{e}}_{5425} & \textnormal{\textbf{e}}_{5435}
    \end{bmatrix} =0,
\end{align}
which is \Cref{eq: simpler case 2}.


\subsection{Case 3} 

\begin{theorem}[Case 3] Let $\Set{F^{ij}}$ be a six-tuple of fundamental matrices such that two of the epipoles in the first three images are equal and are distinct from the third epipole $e_i^4$, and that in the fourth image the three epipoles are distinct but collinear. Then $\Set{F^{ij}}$ is compatible if and only if the triplewise conditions hold.  
\end{theorem}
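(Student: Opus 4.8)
The forward direction is immediate: if $C_1,\ldots,C_4$ is a solution, every sub-triple of cameras realizes the corresponding three fundamental matrices, so all triplewise conditions hold. The content is the converse, and my plan is to reconstruct the four cameras directly, exploiting that three of the four centers are forced onto a common line. I would first record how the hypotheses classify the four triples. Writing the centers as $p_1,p_2,p_3$ on a line $L$ and $p_4$ off it, the triple $\{1,2,3\}$ is of the type treated in \Cref{thm: K3 colin geo} (in each image $i\in\{1,2,3\}$ the two relevant epipoles coincide), while each triple containing the index $4$---namely $\{1,2,4\}$, $\{1,3,4\}$, $\{2,3,4\}$---is of the type treated in \Cref{thm: non colin} (its three images each show two distinct epipoles). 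This is exactly the epipole pattern assumed in the statement.

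For the reconstruction, by \Cref{lem:triple_has_unique_solution} the compatible non-collinear triple $\{1,2,4\}$ admits a solution $C_1,C_2,C_4$, unique up to $\mathrm{PGL}_4$; I would fix the gauge so that $C_2=\begin{bmatrix} I & 0\end{bmatrix}$ and $C_1=\begin{bmatrix} {[e_1^2]_\times F^{12}} & e_1^2\end{bmatrix}$ as in \Cref{le: can sol}. Since $\psi(C_1,C_4)=F^{14}$ and the triple $\{1,3,4\}$ is compatible and non-collinear, \Cref{lem:triple_has_unique_solution} again furnishes a unique camera $C_3$ with $\psi(C_1,C_3)=F^{13}$ and $\psi(C_3,C_4)=F^{34}$. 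Its center $p_3$ satisfies $C_1p_3\sim e_1^3=e_1^2$, so $p_3$ lies on the back-projected line $C_1^{-1}e_1^2=\overline{p_1p_2}=L$; the three centers $p_1,p_2,p_3$ are thus automatically collinear, matching the geometry of Case 3. At this stage $C_1,C_2,C_3,C_4$ realize all of $F^{12},F^{14},F^{24},F^{13},F^{34}$, and the single remaining identity is $\psi(C_2,C_3)=F^{23}$.

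Establishing this last identity is the step I expect to be the main obstacle, and it is where the collinear relation for $\{1,2,3\}$ enters. By \Cref{le: can sol}, the cameras realizing $F^{23}$ with the fixed $C_2=\begin{bmatrix}I&0\end{bmatrix}$ form the explicit family $\begin{bmatrix} {[e_3^2]_\times F^{32}+e_3^2 v^\top} & \lambda e_3^2\end{bmatrix}$ in $(v,\lambda)$, and the proof of \Cref{thm: K3 colin geo}---using precisely the collinear relation $F^{32}=F^{31}[e_1^2]_\times F^{12}$ of \Cref{eq: collin epip}---shows that every member of this family also realizes $F^{13}$ with $C_1$. By \Cref{le: can sol} the cameras realizing $F^{13}$ with $C_1$ likewise form an explicit three-parameter family, whose centers sweep the same line $L=C_1^{-1}e_1^3$. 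I would then argue that these two families coincide: one is contained in the other, and both are irreducible of the same dimension with centers filling $L$, so our reconstructed $C_3$---being a solution of $F^{13}$ with $C_1$---must lie in the $F^{23}$-family, giving $\psi(C_2,C_3)=F^{23}$. This realizes all six fundamental matrices and proves compatibility, with no extra quadruplewise condition needed.

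The delicate point is upgrading the containment of solution families to an equality (equivalently, showing that \emph{any} camera realizing $F^{13}$ with $C_1$ and centered on $L$ also realizes $F^{23}$ with $C_2$), rather than merely placing $C_3$ in a Zariski closure. Should the dimension argument prove awkward, the robust alternative is to mimic \Cref{le: C1C2C3} and \Cref{le: G forms c2}: apply the fundamental action to send the coincident epipoles in images $1,2,3$ and the three collinear epipoles in image $4$ to fixed coordinates, write down explicit cameras $C_1,C_2,C_3,C_4$, and verify $(C_ip)^\top G^{ij}C_jp\equiv 0$ for all six pairs by direct substitution, the triplewise relations being exactly what forces each expression to vanish.
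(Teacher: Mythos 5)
Your architecture is genuinely different from the paper's and its first two steps are fine: using \Cref{lem:triple_has_unique_solution} on the non-collinear triple $\{1,2,4\}$ to fix $C_1,C_2,C_4$, then on the non-collinear triple $\{1,3,4\}$ (with the pair $C_1,C_4$) to get a unique $C_3$, correctly reduces everything to the single identity $\psi(C_2,C_3)\sim F^{23}$. The gap is exactly where you flag it. From the $\Leftarrow$ direction of \Cref{thm: K3 colin geo} you only get the containment ``every solution of $F^{23}$ with $C_2=\begin{bmatrix}I&0\end{bmatrix}$ is a solution of $F^{13}$ with $C_1$,'' while what you need is the reverse membership for your particular $C_3$. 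Arguing that two families of equal dimension, one contained in the other, must coincide is not valid as stated: irreducibility plus equal dimension yields equality of Zariski closures only, and these solution families are images of injective polynomial maps over $\RR$ (semialgebraic sets), so your $C_3$ could a priori sit in the closure difference. Since the entire content of the collinear hypothesis for $\{1,2,3\}$ is concentrated in this one step, the proposal as written does not close; the computational fallback you mention would work in principle but is only a sketch.

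The needed family equality is in fact true, and there is a short fix requiring no dimension count. Compatibility of the collinear triple $\{1,2,3\}$ is symmetric in its labels, so besides $F^{32}\sim F^{31}[e_1^2]_\times F^{12}$ you also have $F^{31}\sim F^{32}[e_2^1]_\times F^{21}$, and running the $\Leftarrow$ argument of \Cref{thm: K3 colin geo} in the gauge $C_1''=\begin{bmatrix}I&0\end{bmatrix}$, $C_2''=\begin{bmatrix}[e_2^1]_\times F^{21}&e_2^1\end{bmatrix}$ (where \Cref{le: can sol} gives exhaustiveness over solutions of $F^{13}$ with $C_1''$) shows every solution of $F^{13}$ with $C_1''$ solves $F^{23}$ with $C_2''$. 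Now transfer gauges: $(C_1'',C_2'')$ and $(C_1,C_2)$ both solve $F^{12}$, so by \Cref{prop:properties_of_the_fundamental_matrix} there is $H\in\mathrm{PGL}_4$ with $C_1\sim C_1''H$, $C_2\sim C_2''H$, and since $\psi$ is invariant under the simultaneous right $\mathrm{PGL}_4$ action (\Cref{prop:fundamental_action_preserves_compibility}), the containment transports to your gauge: any $C$ with $\psi(C_1,C)\sim F^{13}$ satisfies $\psi(C_2,C)\sim F^{23}$. With that lemma your proof closes, and it is arguably leaner than the paper's route, which instead reconstructs the non-collinear triple $\{2,3,4\}$, recovers $p_1$ as the intersection of back-projected lines, builds $C_1$ by mapping the frame $\{p_2,p_4,X,Y\}$ (and $p_1\mapsto 0$) to $\{e_1^2,e_1^4,x_1,y_1\}$, and verifies $\widetilde F^{1i}\sim F^{1i}$ by the frame-uniqueness argument of \Cref{thm: non colin}.
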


We clarify that triplewise conditions here means collinear triplewise conditions for $F^{12},F^{13},F^{23}$ and non-collinear triplewise conditions for the other triplets. 

\begin{proof} If $\Set{F^{ij}}$ is compatible, then any subset is compatible, proving one direction.

For the other, let $C_2,C_3,C_4$ be a reconstruction of the three last cameras. Then we argue that $C_2^{-1}e_2^1,C_3^{-1}e_3^1$ and $C_4^{-1}e_4^1$ meet in exactly a point $p_1$ in the span of $p_2,p_3$, but distinct from each $p_2,p_3$. Since $e_2^1\sim e_2^3$ and $e_3^1\sim e_3^2$, we have that $C_2^{-1}e_2^1,C_3^{-1}e_3^1$ are the same lines, both spanned by $p_2,p_3$. Further, since $e_4^1$ is in the span of $e_4^2,e_4^3$, the line $C_4^{-1}e_4^1$ is contained in the plane spanned by $p_2,p_3,p_4$. Then it is clear that $C_4^{-1}e_4^1$ meets $C_2^{-1}e_2^1=C_3^{-1}e_3^1$. These two lines are not the same, since $p_2,p_3,p_4$ are not collinear. Then they meet in a unique point $p_1$ and this point is distinct from $p_2,p_3$, since its projection by $C_4$ is distinct from that of $p_2,p_3$.

Choose generic $X,Y\in \PP^3$ and define $x_i=C_iX,y_i=C_iY$ for $i=2,3,4$. There are unique point $x_1,y_1$ such that $x_1F^{1j}x_j=y_1F^{1j}y_j=0$ for each $j=2,3,4$. In the case of $x_1$, this is because collinear triplewise conditions say that $F^{12}x_2\sim F^{13}x_3$. To see that $F^{14}x_4$ is distinct from this line, we argue by contradiction. If $F^{12}x_2\sim F^{14}x_4$, then $F^{12}x_2\sim e_1^2\times e_1^4$ would be independent of $X$ which is not possible. Then, there is a unique camera $\widetilde{C}_1$ mapping $X$ to $x_1$, $Y$ to $y_1$, $p_1$ to $0$, $p_2$ to $e_1^2$ and $p_4$ to $e_1^4$. 

Next, we check that $\{x_1,y_1,e_1^2,e_1^4\}$ is a projective frame of $\PP^2$. If, say $x_1,e_1^2,e_1^4$, are linearly dependent, it would follow that $e_1^4F^{12}x_2=0$ or $e_1^2F^{14}x_4=0$. By triplewise constraints we would get $(e_2^1\times e_2^4)^\top x_2=0$ or $(e_4^1\times e_4^2)^\top x_4=0$, a contradiction, since $X$ is generic. If say $x_1,y_1,e_1^2$ are linearly dependent, then $F^{12}x_2\sim F^{12}y_2$ and $x_2,y_2,e_2^1$ would be linearly dependent, which is not possible, since $X$ and $Y$ were generic.  

From $\widetilde{C}_1,C_2,C_3,C_4$ we get fundamental matrices $\widetilde{F}^{ij}$, and we need to check that $\widetilde{F}^{1i}\sim F^{1i}$ for $i=2,3,4$. This is done analogously to the proof of \Cref{thm: non colin}. 
\end{proof}


\subsection{Case 4} 

\begin{theorem}[Case 3] Let $\Set{F^{ij}}$ be any set of $n\choose 2$ fundamental matrices such that the epipoles in each image are all equal. Then $\Set{F^{ij}}$ is compatible if and only if the triplewise conditions hold.  
\end{theorem}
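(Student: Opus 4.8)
The plan is to establish the forward direction by restriction and the backward direction by an explicit reconstruction whose centers all lie on a single line, mirroring the collinear triple analysis in \Cref{thm: K3 colin geo}. For the forward direction, if $\{P_i\}$ is a solution to $\Set{F^{ij}}$, then for any three indices $i,j,k$ the cameras $P_i,P_j,P_k$ realize $F^{ij},F^{ik},F^{jk}$; since all epipoles in each image coincide, every such triple is a collinear triple, so compatibility forces the (collinear) triplewise conditions.

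For the backward direction, write $e_i$ for the common epipole in the $i$-th image. Using \Cref{le: can sol}, I would fix $C_2=\begin{bmatrix}I&0\end{bmatrix}$ and place every remaining camera in the simplest canonical form
\begin{align}
C_k=\begin{bmatrix}[e_k]_\times F^{k2}&e_k\end{bmatrix},\qquad k\neq 2.
\end{align}
Each such $C_k$ is full rank and satisfies $\psi(C_k,C_2)\sim F^{k2}$ by \Cref{le: can sol}, so every pair of indices involving $2$ is handled simultaneously.

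It then remains to verify the pairs $(j,k)$ with $j,k\neq 2$. For each, I would apply the computation from the backward direction of \Cref{thm: K3 colin geo} to the collinear triple $\{2,j,k\}$, taking $C_2=\begin{bmatrix}I&0\end{bmatrix}$ as the reference and $C_j,C_k$ as above: the collinear compatibility relation $F^{k2}=F^{kj}[e_j]_\times F^{j2}$ for this triple (which holds because the triple is compatible) forces $C_j^\top F^{jk}C_k$ to be skew-symmetric, whence $\psi(C_j,C_k)\sim F^{jk}$ by \Cref{prop:properties_of_the_fundamental_matrix}. Thus $\{C_i\}$ realizes all of $\Set{F^{ij}}$, and compatibility follows. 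Since each $\ker C_i$ is the point of the line through $\ker C_2$ cut out by the common epipoles, this also recovers the on-a-line reconstruction promised by \Cref{thm:compatible_if_each_six-tuple_is_compatible}.

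The step I expect to be the main obstacle is justifying that a single global choice of cameras satisfies all pairs at once. In the collinear case the third camera of a triple is not unique (\Cref{lem:triple_has_unique_solution}), so a priori the camera built for $\{2,j,k\}$ might differ from the one built for $\{2,j,k'\}$. The resolution is the observation, already implicit in \Cref{thm: K3 colin geo}, that the skew-symmetry computation is valid for the entire canonical family and is therefore insensitive to the gauge freedom $v_k,\lambda_k$ of \Cref{le: can sol}; fixing every camera in the single simple form above is thus a consistent choice that meets all pairwise relations simultaneously.
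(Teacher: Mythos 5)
Your proposal is correct and is essentially the paper's own proof: the paper likewise fixes a reference camera $\begin{bmatrix}I&0\end{bmatrix}$ (index $1$ rather than your index $2$), places every other camera in the canonical form of \Cref{le: can sol} with $v=0$, $\lambda=1$, and verifies each remaining pair $(i,j)$ by showing $P_i^\top F^{ij}P_j$ is skew-symmetric using the collinear triplewise relation $F^{1j}=-F^{1i}[e_i^1]_\times F^{ij}$ together with \Cref{prop:properties_of_the_fundamental_matrix} --- exactly the computation you import from \Cref{thm: K3 colin geo}. Your closing observation about gauge freedom is the same point the paper handles implicitly by fixing all cameras at once and checking every pair against that single global choice.
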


\begin{proof} Put $P_1=\begin{bmatrix}I & 0
\end{bmatrix}$ and $P_i=\begin{bmatrix}
    [e_i^1]_\times F^{i1}&e_i^1
\end{bmatrix}$ for $i=2,\ldots,n$ as in \Cref{le: can sol}. Recall that by the collinear triplewise conditions that $F^{kj}=F^{ki}[e_i^j]_\times F^{ij}$ and in particular, $F^{1j}=-F^{1i}[e_i^1]_\times F^{ij}$. Then for $i,j\neq 1$, we have:
\begin{align}
   P_i^\top F^{ij}P_j=\begin{bmatrix}
        -F^{1i}[e_i^1]_\times \\ (e_i^1)^\top 
    \end{bmatrix}F^{ij}\begin{bmatrix}
        [e_j^1]_\times F^{j1}& e_j^1
    \end{bmatrix} =\begin{bmatrix}
        F^{1j}[e_j^1]_\times F^{j1} &  -F^{1i}[e_i^1]_\times F^{ij}e_j^1\\
        (e_i^1)^\top F^{ij} [e_j^1]_\times F^{j1} & (e_i^1)^\top F^{ij}e_j^1
    \end{bmatrix}.
\end{align}
By the fact that all epipoles in each image are the same, one can check that this matrix is skew-symmetric. 
\end{proof}

{\small
\bibliographystyle{alpha}
\bibliography{VisionBib}

\begin{thebibliography}{KGGB19b}

\bibitem[AFRP21]{arrigoni2021viewing}
Federica Arrigoni, Andrea Fusiello, Elisa Ricci, and Tomas Pajdla.
\newblock Viewing graph solvability via cycle consistency.
\newblock In {\em Proceedings of the IEEE/CVF International Conference on Computer Vision}, pages 5540--5549, 2021.

\bibitem[BR23]{braatelund2023compatibility}
Martin Br{\aa}telund and Felix Rydell.
\newblock Compatibility of fundamental matrices for complete viewing graphs.
\newblock {\em arXiv preprint arXiv:2303.10658}, 2023.

\bibitem[Br{\aa}21]{braatelund2021critical}
Martin Br{\aa}telund.
\newblock Critical configurations for three projective views.
\newblock {\em arXiv preprint arXiv:2112.05478}, 2021.

\bibitem[CT15]{cui2015global}
Zhaopeng Cui and Ping Tan.
\newblock Global structure-from-motion by similarity averaging.
\newblock In {\em Proceedings of the IEEE International Conference on Computer Vision}, pages 864--872, 2015.

\bibitem[GKGB20]{geifman2020averaging}
Amnon Geifman, Yoni Kasten, Meirav Galun, and Ronen Basri.
\newblock Averaging essential and fundamental matrices in collinear camera settings.
\newblock In {\em Proceedings of the IEEE/CVF Conference on Computer Vision and Pattern Recognition}, pages 6021--6030, 2020.

\bibitem[HK07]{HK}
Richard Hartley and Fredrik Kahl.
\newblock Critical configurations for projective reconstruction from multiple views.
\newblock {\em International Journal of Computer Vision}, 71(1):5 -- 47, 01 2007.

\bibitem[HZ04]{Hartley2004}
Richard~I. Hartley and Andrew Zisserman.
\newblock {\em Multiple View Geometry in Computer Vision}.
\newblock Cambridge University Press, ISBN: 0521540518, second edition, 2004.

\bibitem[KGGB19a]{kasten2019algebraic}
Yoni Kasten, Amnon Geifman, Meirav Galun, and Ronen Basri.
\newblock Algebraic characterization of essential matrices and their averaging in multiview settings.
\newblock In {\em Proceedings of the IEEE/CVF International Conference on Computer Vision}, pages 5895--5903, 2019.

\bibitem[KGGB19b]{kasten2019gpsfm}
Yoni Kasten, Amnon Geifman, Meirav Galun, and Ronen Basri.
\newblock Gpsfm: Global projective sfm using algebraic constraints on multi-view fundamental matrices.
\newblock In {\em Proceedings of the IEEE/CVF Conference on Computer Vision and Pattern Recognition}, pages 3264--3272, 2019.

\bibitem[Mar20]{martyushev2020necessary}
Evgeniy~V Martyushev.
\newblock Necessary and sufficient polynomial constraints on compatible triplets of essential matrices.
\newblock {\em International Journal of Computer Vision}, 128(12):2781--2793, 2020.

\bibitem[OS15]{ozyesil2015robust}
Onur Ozyesil and Amit Singer.
\newblock Robust camera location estimation by convex programming.
\newblock In {\em Proceedings of the IEEE Conference on Computer Vision and Pattern Recognition}, pages 2674--2683, 2015.

\bibitem[SLK16]{sattler2016efficient}
Torsten Sattler, Bastian Leibe, and Leif Kobbelt.
\newblock Efficient \& effective prioritized matching for large-scale image-based localization.
\newblock {\em IEEE transactions on pattern analysis and machine intelligence}, 39(9):1744--1756, 2016.

\bibitem[SSS06]{snavely2006photo}
Noah Snavely, Steven~M Seitz, and Richard Szeliski.
\newblock Photo tourism: exploring photo collections in 3d.
\newblock In {\em ACM siggraph 2006 papers}, pages 835--846. 2006.

\bibitem[TOP18]{trager2018solvability}
Matthew Trager, Brian Osserman, and Jean Ponce.
\newblock On the solvability of viewing graphs.
\newblock In {\em Proceedings of the European Conference on Computer Vision (ECCV)}, pages 321--335, 2018.

\end{thebibliography}
}



\end{document}